\newtheorem{thm}{Theorem}[section]
\newtheorem{lemm}[thm]{Lemma}
\newtheorem{coro}[thm]{Corollary}
\newtheorem{prop}[thm]{Proposition}
\theoremstyle{definition}
\newtheorem{defi}[thm]{Definition}
\newtheorem{remark}[thm]{Remark}
\begin{document}

\title{Flags and orbits of connected reductive groups over local rings}

\author{Zhe Chen}

\address{Department of Mathematics, Shantou University, Shantou, China}

\email{zhechencz@gmail.com}

\begin{abstract}
We prove that generic higher Deligne--Lusztig representations over truncated formal power series are non-nilpotent, when the parameters are non-trivial on the biggest reduction kernel of the centre; we also establish a relation between the orbits of higher Deligne--Lusztig representations of $\mathrm{SL}_n$ and of $\mathrm{GL}_n$. Then we introduce a combinatorial analogue of Deligne--Lusztig construction for general and special linear groups over local rings; this construction generalises the higher Deligne--Lusztig representations and affords all the nilpotent orbit representations, and for $\mathrm{GL}_n$ it also affords all the regular orbit representations as well as the invariant characters of the Lie algebra.
\end{abstract}

\maketitle

\tableofcontents

\section{Introduction}\label{section:Intro}

Let $\mathcal{O}:=\mathbb{F}_q[[\pi]]$ be a complete discrete valuation ring in characteristic $p$, and let $\mathbb{G}$ be a connected reductive group over $\mathcal{O}$. Here we are interested in the representation theory of $\mathbb{G}(\mathcal{O})$. As a profinite group, each of its smooth representations factors through $\mathbb{G}(\mathcal{O}_r)$ for some $r\in\mathbb{Z}_{>0}$, where $\mathcal{O}_r:=\mathcal{O}/\pi^{r}$; this draws our attention to the representations of $\mathbb{G}(\mathcal{O}_r)$ for $r$ runs over all positive integers.

\vspace{2mm} In the level $1$ case, that is, the case $r=1$, these groups are finite groups of Lie type, and the studies of their representations are to a large extent influenced by the marvellous work of Deligne and Lusztig \cite{DL1976} since 1976. For $r\geq1$, there is a natural generalisation of this work, as pointed out by Lusztig \cite{Lusztig1979SomeRemarks}, by viewing these reductive groups over finite local rings as certain algebraic groups over the finite residue field; some fundamental results on this aspect were first established by Lusztig himself 25 years later in \cite{Lusztig2004RepsFinRings}. However, due to the nature of these algebraic groups (they are usually neither reductive, nor solvable), the corresponding proofs are much more difficult than the $r=1$ case, and the properties of this generalisation are not yet well-understood.

\vspace{2mm} The works of Deligne--Lusztig and Lusztig made an extensive use of \'etale topology of varieties over finite fields, and are referred to as the geometric approach to the representations of $\mathbb{G}(\mathcal{O}_r)$. Meanwhile, there are also algebraic methods, like Clifford theory and orbits, which link the representations of these groups with the orbits in the corresponding finite Lie algebras. Indeed, it is believed that to uniformly describe all irreducible representations of these groups is a very hard, probably hopeless, task, and one shall start by focusing on smaller families of representations; at least for $\mathbb{G}=\mathrm{GL}_n$, the algebraic methods suggest the following three families to be first considered:
\begin{itemize}
\item nilpotent (orbit) representations;
\item regular (orbit) representations;
\item semisimple (orbit) representations.
\end{itemize}
These families are defined in terms of (co-)adjoint orbits via Clifford theory, and it would be very interesting to understand how these algebraically defined families interact with the geometrically constructed representations of Deligne--Lusztig type. This paper gives an attempt on this aspect. In Section~\ref{sec: preliminaries} we review some of the above notions.

\vspace{2mm} Let $\theta$ denote a linear character of $\mathbb{T}(\mathcal{O}_r)$, where $\mathbb{T}$ is a maximal torus of $\mathbb{G}$. When $\theta$ is generic enough, Lusztig associated to it an irreducible representation $R_{\mathbb{T}}^{\theta}$ of $\mathbb{G}(\mathcal{O}_r)$, by viewing $\mathbb{G}$ as an object over $\mathbb{F}_q$; it is called a higher Deligne--Lusztig representation in \cite{ChenStasinski_2016_algebraisation}. Previously, based on Lusztig's explicit computations in \cite{Lusztig2004RepsFinRings}, Stasinski showed in \cite{Sta2011ExtendedDL} that certain nilpotent representations of $\mathrm{SL}_2(\mathbb{F}_q[[\pi]]/\pi^2)$ were unrealisable in higher Deligne--Lusztig theory. In Section~\ref{sec: non-nilpotentcy}, we prove that (when $p$ is not too small), if $\theta$ is non-trivial on the biggest reduction kernel of the centre, then the representation $R_{\mathbb{T}}^{\theta}$ is not nilpotent; see Theorem~\ref{thm: non-nilpotency}. Along the way we also study an inner product via a general principle of Lusztig (see Lemma~\ref{lemm: inner prod}), and give a generalisation of Hill's characterisation of nilpotent representations (see Proposition~\ref{prop: nilpotent reps}). Then in Section~\ref{sec:GL_n and SL_n} we study the higher Deligne--Lusztig representations of $\mathrm{SL}_n$ and $\mathrm{GL}_n$ via the idea of regular embedding, and establish a relation between their orbits (see Proposition~\ref{prop: regularity for SL_n}).

\vspace{2mm} Within the above results, it is desired to find a suitable way to extent the family of higher Deligne--Lusztig representations to cover all the nilpotent representations. The first attempt has been made for $\mathrm{GL}_n$ and $\mathrm{SL}_n$ by Stasinski in \cite{Sta2011ExtendedDL}, in which he introduced an extension using tamely ramified tori, and proved that, for $n=r=2$, all the missing nilpotent representations can be afforded in his construction. In Section~\ref{sec: DL for admissible triple} we give another extension for general and special linear groups; more precisely, we construct an analogue of Deligne--Lusztig representations based on applying Lusztig's idea ``view an object over $\mathcal{O}_r$ as an object over the residue field'' to flags. Our method is different and more combinatorial, reflecting some group theoretical feature appeared in the $r=1$ case. By explicit arguments we show that our construction produces all the higher Deligne--Lusztig representations and affords all the nilpotent representations for $\mathrm{GL}_n$ and $\mathrm{SL}_n$, for any $n$ and $r$; see Theorem~\ref{thm: admissible DL rep}. 

\vspace{2mm} In Section~\ref{sec: reg orbit} we turn back to regular orbits. Using a work of Hill on an analogue of Gelfand--Graev modules, we deduce that our construction mentioned above also affords all the regular orbit representations of $\mathrm{GL}_n(\mathcal{O}_r)$ (see Corollary~\ref{coro: regular orbit}), as well as the so-called invariant characters of the finite Lie algebra.

\vspace{2mm} \noindent {\bf Acknowledgement.} The author is grateful to Alexander Stasinski for several helpful suggestions, and thanks Yongqi Feng and Ying Zong for useful discussions. During the preparation of this work the author is partially supported by the STU funding NTF17021.

\section{Preliminaries}\label{sec: preliminaries}

In this section we recall the notions in higher Deligne--Lusztig representations following \cite{Lusztig2004RepsFinRings}, and recall the notions concerning orbits following \cite[2.4]{Hill_1995_Regular}. 

\vspace{2mm} Let $\mathcal{O}=\mathbb{F}_q[[\pi]]$, where $\pi$ is a uniformiser and $\mathrm{char}~\mathbb{F}_q=p$. Put $\mathcal{O}_r:=\mathcal{O}/\pi^r$, where $r$ is a fixed arbitrary positive integer. Let $\mathbb{G}$ be a connected reductive group over $\mathrm{Spec}~\mathbb{F}_q[[\pi]]/\pi^r$ and let $\mathbf{G}$ be the base change of $\mathbb{G}$ to $\mathrm{Spec}~\overline{\mathbb{F}}_q[[\pi]]/\pi^r$. Every closed subgroup scheme $\mathbf{H}$ of $\mathbf{G}$, like $\mathbf{G}$ itself, Borel subgroups, and maximal tori, admits a natural algebraic group structure over $\mathrm{Spec}~\overline{\mathbb{F}}_q$, denoted by the corresponding Roman font $H$ (sometimes we also write $H_r$ for $H$ when different levels are involved); we have
$$\mathbf{H}(\overline{\mathbb{F}}_q[[\pi]]/\pi^r)=H(\overline{\mathbb{F}}_q)$$
as groups. There is a natural (geometric) Frobenius endomorphism $F$ on $G$ such that 
$$G^F:=G(\overline{\mathbb{F}}_q)^F\cong\mathbb{G}(\mathcal{O}_r)$$
as finite groups. A closed subgroup scheme $\mathbf{H}\subseteq\mathbf{G}$ is called $F$-stable or $F$-rational, if $FH\subseteq H$. We define the Lang isogeny (on $G$) with respect to $F$ to be $L\colon g\mapsto g^{-1}F(g)$.

\vspace{2mm} Fix a prime $\ell\neq p$. For any variety $X$ over $\overline{\mathbb{F}}_p$ (in this paper, by a variety we mean a reduced quasi-projective scheme) we have the compactly supported $\ell$-adic cohomology groups $H^i_c(X,\overline{\mathbb{Q}}_{\ell})$, which are finite dimensional vector spaces over $\overline{\mathbb{Q}}_{\ell}$.

\vspace{2mm} Let $\mathbf{B}=\mathbf{U}\rtimes\mathbf{T}$ be a Borel subgroup of $\mathbf{G}$, where $\mathbf{U}$ is the unipotent radical and $\mathbf{T}$ is an $F$-stable maximal torus; let $B,U,T$ be the corresponding closed subgroup of $G$. Note that $G^F$ and $T^F$ act on the variety $L^{-1}(FU)$ by the left translation and the right translation, respectively; this induces a $G^F\times T^F$-bimodule structure on $H^i_c(L^{-1}(FU),\overline{\mathbb{Q}}_{\ell})$.

\begin{defi}\label{defi: DL rep}
The virtual representation 
$$R_{T,U}^{\theta}:=\sum_{i}(-1)^iH_c^i(L^{-1}(FU),\overline{\mathbb{Q}}_{\ell})_{\theta},$$
where $\theta\in\widehat{T^F}$ is an irreducible $\overline{\mathbb{Q}}_{\ell}$-character of $T^F$, is called a higher Deligne--Lusztig representation of $G^F$; here the subscript $(-)_{\theta}$ means the $\theta$-isotypical part.
\end{defi}

For $1\leq i\leq r$, the reduction map $\mathcal{O}_r\rightarrow \mathcal{O}_{i}$ modulo $\pi^i$ induces an algebraic group surjection 
$$\rho_{r,i}\colon G_r\longrightarrow G_i;$$
we denote the kernel by $G^i:=G_r^i$. Note that all these $G^i$ are $F$-stable. Similar notation applies to the closed subgroups of $G$. Now fix a character $\theta\in \widehat{T^F}$. If the stabiliser of $\theta$ in $N(T)^F$ is $T^F$, then we say $\theta$ is in \emph{general position}. If for any root $\alpha$ of $T$ we have $\theta|_{N_{F}^{F^a}(((T^{\alpha})^{r-1})^{F^a})}\neq 1$, where $N_{F}^{F^a}(t):=t\cdot F(t) \cdots t^{F^{a-1}}$ denotes the norm map and $a\in\mathbb{Z}_{>0}$ is any integer such that $T^{\alpha}$ is $F^a$-stable, then we say $\theta$ is \emph{regular}. We have:

\begin{thm}[Lusztig]\label{thm: irr of generic DL}
If $\theta$ is regular and in general position, then $R_{T,U}^{\theta}$, up to a sign, is an irreducible representation of $\mathbb{G}(\mathcal{O}_r)$.
\end{thm}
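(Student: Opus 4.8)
The plan is to reduce the assertion to a single inner-product computation and then evaluate that inner product by the cohomological method of \cite{DL1976}, in the form adapted to the higher level in \cite{Lusztig2004RepsFinRings} (see also \cite{ChenStasinski_2016_algebraisation}). Since a virtual character $\chi$ of a finite group equals $\pm\chi'$ with $\chi'$ irreducible precisely when $\langle\chi,\chi\rangle=1$, it suffices to prove
$$\langle R_{T,U}^{\theta},\,R_{T,U}^{\theta}\rangle_{G^F}=1.$$
Put $X:=L^{-1}(FU)$; the Lang map is étale, so $X$ is smooth (an étale cover of the affine space $FU$), and $G^F$ acts freely on $X$ by left translation. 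Hence $\ell$-adic cohomology of the diagonal quotient computes the $G^F$-invariants, and combining this with the Künneth formula and the standard handling of isotypic components, $\langle R_{T,U}^{\theta},R_{T,U}^{\theta}\rangle_{G^F}$ becomes the alternating sum of dimensions of the $(\theta,\theta^{-1})$-isotypic components, for the residual $T^F\times T^F$-action, of $H^{\ast}_c(\Sigma)$, where
$$\Sigma:=(X\times X)/G^F\;\cong\;\{(v,y)\in FU\times G:\ y^{-1}vF(y)\in FU\},\qquad (t,t')\cdot(v,y)=(t^{-1}vt,\ t^{-1}yt').$$
The only subtlety in this step is the passage between $H^{\ast}_c$ and its dual, which is handled exactly as in \cite{DL1976}.

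I would then stratify $\Sigma$ via the Bruhat decomposition of the reductive quotient $G_1=G_r/G^1$ (pulled back along $\rho_{r,1}$) together with the congruence filtration, the kernel $G^1$ being unipotent so that the strata fibre over products of affine spaces. The disjointness mechanism of \cite{DL1976} --- whose prototype is that $H^{\ast}_c(\mathbb{G}_m)$ and $H^{\ast}_c(\mathbb{G}_a)$ carry the trivial $\ell$-adic module --- then annihilates the $(\theta,\theta^{-1})$-isotypic cohomology of every stratum on which a positive-dimensional subgroup of $T^F\times T^F$ acts freely through a connected group (a torus or a vector group), after one puts $y$ in a Bruhat-type normal form. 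This leaves only the strata on which $y$ lies, up to the deepest congruences, in $N(T)$.

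On these surviving strata, writing $w\in W:=N(T)/T$ for the relative position recorded by $y$, two hypotheses finish the count. First, the general-position hypothesis --- that $T^F$ is its own stabiliser in $N(T)^F$ --- forces $w=1$: a surviving stratum requires a representative of $w$ in $N(T)^F$ fixing $\theta$, hence lying in $T^F$, exactly as at level one. Second, within the $w=1$ locus there remains a contribution governed by the congruence filtrations $T\supseteq T^1\supseteq\cdots\supseteq T^{r-1}$ and the corresponding filtrations on the root subgroups; for each root $\alpha$ this contribution is a higher Gauss-sum-type alternating sum over the deepest root-subgroup layer twisted by $\theta$ composed with the norm map, and the regularity hypothesis $\theta|_{N_{F}^{F^a}(((T^{\alpha})^{r-1})^{F^a})}\neq 1$ makes each such sum vanish. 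The only term left is the diagonal one, with $y$ in $T$ modulo the deepest congruence kernel, which contributes exactly $1$; summing over all strata gives $\langle R_{T,U}^{\theta},R_{T,U}^{\theta}\rangle_{G^F}=1$, and hence $\pm R_{T,U}^{\theta}$ is irreducible.

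The main obstacle is the middle step. Because the groups $G_r$ are in general neither reductive nor solvable, neither the Mackey and parabolic-induction arguments of the reductive case nor the methods for solvable groups apply, and one must carry out the stratification and the stratum-by-stratum vanishing directly, using the six operations and the Lefschetz fixed-point formula. Making the disjointness vanishing precise for the strata with $y$ outside $N(T)$ times a deep-congruence part, and evaluating the Gauss-sum contributions on the congruence-filtration strata where the regularity hypothesis enters, are the delicate points; this is the technical core of \cite{Lusztig2004RepsFinRings}.
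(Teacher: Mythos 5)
The paper does not prove this theorem itself; its proof is a one-line citation to \cite[2.4]{Lusztig2004RepsFinRings}, to which the result is attributed. Your sketch accurately reproduces the skeleton of Lusztig's argument there --- reduction to $\langle R_{T,U}^{\theta},R_{T,U}^{\theta}\rangle_{G^F}=1$, cohomology of the diagonal quotient $\Sigma$, the level-one Bruhat stratification combined with the congruence filtration, the homotopy/disjointness vanishing, general position forcing $w=1$, and regularity killing the deep-layer character sums --- and you correctly and candidly flag the stratum-by-stratum vanishing as the technical core you do not carry out, which is precisely what the paper's citation is standing in for.
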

\begin{proof}
See \cite[2.4]{Lusztig2004RepsFinRings}.
\end{proof}

In the remaining of this section let $r\geq 2$; note that $G^{r-1}$ can be viewed as the additive group of the Lie algebra $\mathfrak{g}$ of $G_1$, and the conjugation action of $G$ on $G^{r-1}$ becomes the adjoint action of $G_1$ on $\mathfrak{g}$ (as the conjugation action of $G$ on $G^{r-1}$ factors through $G_1$). For any irreducible $\overline{\mathbb{Q}}_{\ell}$-representation $\sigma$ of $G^F$, consider the restriction $\sigma|_{(G^{r-1})^F}$; by Clifford theory (see e.g.\ \cite[6.2]{Isaacs_CharThy_Book}) we have
$$\sigma|_{(G^{r-1})^F}\cong e\cdot\left(\sum_{\chi\in\Omega(\sigma)} \chi \right),$$
where $e\in \mathbb{Z}_{>0}$ and $\Omega(\sigma)$ is a $G_1^F$-orbit of irreducible representations of $(G^{r-1})^F$. In particular, we have a map 
\begin{equation*}
\Omega\colon \mathrm{Irr}(\mathbb{G}(\mathcal{O}_r))\longrightarrow G_1^F\backslash\mathrm{Irr}((G^{r-1})^F)
\end{equation*}
from the set of irreducible representations of $G^F$ to the set of $\mathbb{G}(\mathbb{F}_q)$-orbits of irreducible representations of $\mathfrak{g}^F$.

\vspace{2mm} Consider the following condition, under which we will often work:
\begin{itemize}\hypertarget{condition (*)}{}
\item[{\bf (*)}] There exists a non-degenerate $G_1$-invariant bilinear form  $\mu(-,-)\colon \mathfrak{g}\times\mathfrak{g}\rightarrow \overline{\mathbb{F}}_q$ defined over $\mathbb{F}_q$, as well as a $G_1$-equivariant bijective morphism $\epsilon(-)$ from the nilpotent variety of $\mathfrak{g}$ to the unipotent variety of $G_1$ defined over $\mathbb{F}_q$.
\end{itemize}
This condition requires $p$ is not too small unless $\mathbb{G}=\mathrm{GL}_n$; see e.g.\ the discussions in \cite[2.5 and 2.7]{Let2005book} and \cite[1.15]{Carter1993FiGrLieTy}. 

\vspace{2mm} Now assume the existence and fix such a bilinear form $\mu$; we also fix a non-trivial character $\phi\colon \mathbb{F}_q\rightarrow \overline{\mathbb{Q}}_{\ell}^{\times}$, then for any $y\in \mathfrak{g}^F\cong (G^{r-1})^F$, we have an induced $\overline{\mathbb{Q}}_{\ell}$-character 
$$\phi_y\colon x\longmapsto \phi(\mu(x,y))$$ 
of $ \mathfrak{g}^F\cong (G^{r-1})^F$. The map $y\mapsto \phi_y$ induces a bijection from the set of adjoint $G_1^F$-orbits of $\mathfrak{g}^F$ to the set of $G_1^F$-orbits of irreducible representations of $\mathfrak{g}^F$; combine the above $\Omega$ with the inverse of this bijection we obtain a map
$$\Omega'\colon \mathrm{Irr}(\mathbb{G}(\mathcal{O}_r))\longrightarrow \mathbb{G}(\mathbb{F}_q)\backslash \mathfrak{g}^F$$
from the set of irreducible representations of $G^F$ to the set of adjoint orbits of $\mathfrak{g}^F$.

\begin{defi}
An irreducible representation $\sigma$ of $\mathbb{G}(\mathcal{O}_r)$ is called nilpotent (resp.\ regular, semisimple), if $\Omega'(\sigma)$ is nilpotent (resp.\ regular, semisimple).
\end{defi}

To avoid possible confusions we sometimes also use the term \emph{nilpotent (resp.\ regular, semisimple) orbit representation}.

\section{Inner products and (non-)nilpotency}\label{sec: non-nilpotentcy}

Throughout this section we assume \hyperlink{condition (*)}{{\bf (*)}} holds, and fix the choice of such a $\mu$ and $\epsilon$.

\vspace{2mm} Based on Lusztig's computations in \cite{Lusztig2004RepsFinRings}, Stasinski proved in \cite{Sta2011ExtendedDL} that certain nilpotent representations of $\mathrm{SL}_2(\mathbb{F}_q[[\pi]]/\pi^2)$ are missed in higher Deligne--Lusztig theory. In this section we continue this direction, and prove a result suggesting that, for any group with a non-trivial connected centre, the intersection of the set of higher Deligne--Lusztig representations and the set of nilpotent representations is always small.

\vspace{2mm} Let $\mathbf{B}'=\mathbf{U}'\rtimes \mathbf{T}'$ be another Borel subgroup of $\mathbf{G}$, with $\mathbf{T}'$ an $F$-stable maximal torus and $\mathbf{U}'$ the unipotent radical; let $B'=U'\rtimes T'$ be the corresponding subgroups of $G$. Consider the $G^F$-representation $\sum_{i}(-1)^iH_c^{i}(L^{-1}(FB'^{r-1})$; it appears in the scope of a generalised Deligne--Lusztig construction (see the argument of Theorem~\ref{thm: admissible DL rep}~(ii)). Here we first prove the following general inner product result.

\begin{lemm}\label{lemm: inner prod} 
Suppose $\mathcal{Z}:=(Z(G)^{1})^{\circ}\neq 1$, then we have
\begin{equation*}
\left \langle \sum_{i}(-1)^iH_c^{i}(L^{-1}(FB'^{r-1}),\overline{\mathbb{Q}}_{\ell}), R_{T,U}^{\theta}\right\rangle_{G^F}=0
\end{equation*}
if $\theta$ is non-trivial on $\mathcal{Z}^F$.
\end{lemm}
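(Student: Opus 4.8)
The plan is to compute both sides against the isotypical decomposition of the cohomology of a Lang preimage, using the fact that $\mathcal{Z}^F$ is a central subgroup of $G^F$ that acts trivially on each variety in sight (up to a controlled twist). First I would record that, since $\mathcal{Z} = (Z(G)^1)^\circ$ is a connected subgroup of the centre contained in $G^1$, its conjugation action on everything is trivial; in particular $\mathcal{Z}^F \subseteq T^F$ and $\mathcal{Z}^F \subseteq B'^{r-1}\cdot(\text{something})$ — more precisely $\mathcal{Z}^F \subseteq (Z(G))^F$ acts on $L^{-1}(FU)$ and on $L^{-1}(FB'^{r-1})$ by left translation, and because it is central this left-translation action coincides with the restriction of the right $T^F$-action (resp.\ the relevant torus action) to $\mathcal{Z}^F$. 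The key point is then the standard orthogonality principle of Deligne--Lusztig type (a general lemma of Lusztig, which the paper invokes elsewhere): for two Lang preimages $L^{-1}(FV_1)$ and $L^{-1}(FV_2)$, the inner product of the associated virtual $G^F$-modules is computed, via the Lefschetz fixed point formula, as an alternating sum of traces of Frobenius on the cohomology of the quotient variety $\{(x,y) : x^{-1}F(x)\in FV_1,\ y^{-1}F(y)\in FV_2,\ x^{-1}y\in ??\}$ — equivalently one reduces to showing that a certain finite group, through which the relevant Frobenius-fixed-point count factors, contains a copy of $\mathcal{Z}^F$ acting by the nontrivial character $\theta|_{\mathcal{Z}^F}$, forcing the count (hence every cohomology group, hence the inner product) to vanish.

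The concrete steps I would carry out are as follows. First, realise $\langle \sum_i(-1)^iH^i_c(L^{-1}(FB'^{r-1})), R_{T,U}^\theta\rangle_{G^F}$ as $\sum_i (-1)^i \dim \big(H^i_c(L^{-1}(FB'^{r-1})) \otimes H^{\bullet}_c(L^{-1}(FU))_\theta\big)^{G^F}$ via Poincar\'e duality and the K\"unneth formula, so that it becomes an alternating trace of Frobenius on $H^\bullet_c\big(G^F\backslash(L^{-1}(FB'^{r-1})\times L^{-1}(FU))\big)_\theta$, where the subscript $\theta$ picks out the $\theta$-isotypical part for the residual right $T^F$-action on the second factor. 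Second, observe that $\mathcal{Z}^F$ embeds into $T^F$ and acts on this quotient variety; because $\mathcal{Z}$ is central and connected, left multiplication by $\mathcal{Z}^F$ on the first factor agrees with an action that can be absorbed into the $G^F$-quotient, so effectively the residual $\mathcal{Z}^F$-action on the quotient (after taking $G^F$-coinvariants) is trivial on cohomology — while on the $R_{T,U}^\theta$ side the $T^F$-action restricts on $\mathcal{Z}^F$ to scalar multiplication by $\theta|_{\mathcal{Z}^F}$. Third, since $\theta|_{\mathcal{Z}^F}\neq 1$, these two incompatible $\mathcal{Z}^F$-actions force every cohomology group of the $\theta$-isotypic quotient to vanish, hence the inner product is $0$. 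The cleanest way to phrase this: the $\mathcal{Z}^F$-fixed-point argument shows $H^i_c(L^{-1}(FB'^{r-1}))$ carries, as a $G^F$-module, only the trivial $\mathcal{Z}^F$-character, while $R_{T,U}^\theta$ carries only $\theta|_{\mathcal{Z}^F}\neq 1$ on its central subgroup $\mathcal{Z}^F$, so they share no constituents.

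I expect the main obstacle to be the bookkeeping in the second step: verifying carefully that the left-translation action of $\mathcal{Z}^F$ on $L^{-1}(FB'^{r-1})$ really does act trivially on $\sum_i(-1)^iH^i_c$ as a virtual character of $\mathcal{Z}^F$ — i.e.\ that no nontrivial character of $\mathcal{Z}^F$ appears. The subtle point is that $\mathcal{Z}^F$ acts by left translation, and for this to be "trivial on cohomology" one needs either that $\mathcal{Z}^F$ is already absorbed by the right torus action (which requires checking $\mathcal{Z}\subseteq B'$ and that the right action on $L^{-1}(FB'^{r-1})$ by the appropriate quotient torus, not just $T'^F$, accounts for it), or a direct homotopy/connectedness argument showing $\mathcal{Z}$ connected implies $L^{-1}(FB'^{r-1})$ is an $\mathcal{Z}$-torsor-like bundle over a base with trivial $\mathcal{Z}^F$-action. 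Once that is pinned down, the rest is the standard Deligne--Lusztig orthogonality formalism together with the elementary representation-theoretic observation about incompatible central characters, which I would state as a short auxiliary remark rather than reprove.
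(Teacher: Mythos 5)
Your overall strategy is the same as the paper's: reduce, via the inner-product formula of \cite[3.1]{Chen_2017_InnerProduct}, to the $\theta$-isotypic part of the cohomology of an intertwining variety, and then kill it by extending an action of the finite group $\mathcal{Z}^F$ to an action of the connected group $\mathcal{Z}$ so as to invoke homotopy invariance of $\ell$-adic cohomology. However, the step you yourself flag as ``the main obstacle'' is precisely where the essential idea of the proof lives, and neither of the two resolutions you sketch works, so there is a genuine gap.

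Concretely, your ``cleanest phrasing'' is false once $r\geq 3$: the virtual module $\sum_i(-1)^iH^i_c(L^{-1}(FB'^{r-1}),\overline{\mathbb{Q}}_\ell)$ does \emph{not} carry only the trivial $\mathcal{Z}^F$-character. When $\mathbf{B}'$ is $F$-stable this module is $\mathrm{Ind}_{(B'^{r-1})^F}^{G^F}1$ (this identification is made in the proof of Theorem~\ref{thm: non-nilpotency}), and the central subgroup $\mathcal{Z}^F$ acts on it through exactly those characters trivial on $\mathcal{Z}^F\cap(B'^{r-1})^F$; since $\mathcal{Z}=(Z(G)^1)^\circ$ lives in $G^1$ while $B'^{r-1}$ lives in the much smaller kernel $G^{r-1}$, this intersection is a proper subgroup of $\mathcal{Z}^F$ for $r\geq3$, so plenty of nontrivial characters of $\mathcal{Z}^F$ occur. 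For the same underlying reason, a homotopy argument run on $L^{-1}(FB'^{r-1})$ alone fails: for $z\in\mathcal{Z}$ one has $L(zx)=z^{-1}F(z)\,L(x)$, and $z^{-1}F(z)$ need not lie in $FB'^{r-1}$, so left translation by the connected group $\mathcal{Z}$ does not preserve $L^{-1}(FB'^{r-1})$; nor does $\mathcal{Z}^F$ lie inside the right $(FB'^{r-1})^F$-action. Running it on $L^{-1}(FU)$ is blocked by $\mathcal{Z}\cap U=1$.

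What the paper does instead is carry out the homotopy argument on the combined variety $\Sigma=\{(x',x,y)\in FB'^{r-1}\times FU\times G \mid x'F(y)=yx\}$, with $T^F$ acting by $t\cdot(x',x,y)=(x',t^{-1}xt,yt)$, rather than on either Lang preimage in isolation. The extension of the $\mathcal{Z}^F$-action to all of $\mathcal{Z}$ involves a compensating twist in the first coordinate: setting $f(t):=tF(t)^{-1}$, one lets $t\in\mathcal{Z}$ act (after a change of variables on $\Sigma$) by $(x',x,y)\mapsto(x'f(t),x,yt)$. The factor $f(t)$, which vanishes exactly on $\mathcal{Z}^F$, is what keeps the defining equation $x'F(y)=yx$ stable under all of $\mathcal{Z}$ and not just $\mathcal{Z}^F$; verifying this (using centrality of $\mathcal{Z}$ throughout) is the computation your sketch does not carry out and cannot avoid. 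To repair the argument you would need to locate this twisted $\mathcal{Z}$-action on the two-sided variety, or an equivalent device; neither left-translation on one Lang preimage nor absorption into the right torus action supplies it.
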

\begin{proof}
By \cite[3.1]{Chen_2017_InnerProduct} the inner product equals to the Euler characteristic
\begin{equation*}
\sum_i(-1)^i\dim H^i_c(\Sigma,\overline{\mathbb{Q}}_{\ell})_{\theta},
\end{equation*}
where 
$$\Sigma:=\{  (x',x,y)\in FB'^{r-1}\times FU\times G\mid x'F(y)=yx \},$$
on which $t\in T^F$ acts by sending $(x',x,y)$ to $(x',t^{-1}xt,yt)$. 

\vspace{2mm} Write $y=\hat{y}y_1$ and $x=\hat{x}x_1$, where $\hat{y}\in G^1$, $\hat{x}\in FU^1$, $y_1=\rho_{r,1}(y)\in G_1$, and $x_1=\rho_{r,1}(x)\in FU_1$. Applying $\rho_{r,1}$ to the defining equation $x'F(y)=yx$ of $\Sigma$ we see that $x_1=y_1^{-1}F(y_1)$, so 
$$x'F(y)=x'F(\hat{y})F(y_1)=x'F(\hat{y})y_1x_1$$
and
$$yx=\hat{y}y_1\hat{x}x_1,$$
thus $\Sigma$ can be re-written as
$$\Sigma'=\{  (x',\hat{x},x_1, \hat{y},y_1)\in FB'^{r-1}\times FU^1\times FU_1 \times G^1\times  G_1)  \mid x'F(\hat{y})y_1=\hat{y}y_1\hat{x}; L(y_1)=x_1 \}.$$
The identification $\Sigma=\Sigma'$ is $T^F$-equivariant under the action of $T^F$ on $\Sigma'$ given by: 
\begin{equation*}
\begin{tikzcd}
(x',\hat{x},x_1, \hat{y},y_1)\in\Sigma' \arrow[d,mapsto,swap,"t\in T^F"]  \\
(x',t^{-1}\hat{x}x_1t \rho_{r,1}(t^{-1}){x}_1^{-1}\rho_{r,1}(t),  \rho_{r,1}(t)^{-1}x_1\rho_{r,1}(t),\hat{y}y_1t\rho_{r,1}(y_1t)^{-1},\rho_{r,1}(y_1t) ) \in\Sigma'.
\end{tikzcd}
\end{equation*}
In particular, $(T^1)^F$ acts on $\Sigma'$ by
$$t\in (T^1)^F\colon (x',\hat{x},x_1, \hat{y},y_1)\longmapsto (x',t^{-1}\hat{x}x_1t x_1^{-1}, x_1, \hat{y}y_1ty_1^{-1},y_1 ).$$
Now consider the restriction of the above action of $(T^1)^F$ to the subgroup $\mathcal{Z}^F$ (note that $F\mathcal{Z}=\mathcal{Z}$ by definition); this restriction extends to the whole connected algebraic group $\mathcal{Z}$ via:
$$(x',\hat{x},x_1, \hat{y},y_1)\longmapsto (x'f(t),t^{-1}\hat{x}x_1t x_1^{-1}, x_1, \hat{y}y_1ty_1^{-1},y_1 )$$
for any $t\in \mathcal{Z}$, where $f(t):=tF(t)^{-1}$. Therefore, by the homotopy property of \'etale cohomology \cite[Page~136]{DL1976}, the action of $\mathcal{Z}^F$ on $H_c^i(\Sigma,\overline{\mathbb{Q}}_{\ell})$ is trivial, thus $H_c^i(\Sigma,\overline{\mathbb{Q}}_{\ell})_{\theta}=0$ by our assumption. This proves the lemma.
\end{proof}

We need the following characterisation of nilpotent representations.

\begin{prop}\label{prop: nilpotent reps}
If $\mathbf{B}'$ is $F$-stable, then the nilpotent representations of $\mathbb{G}(\mathcal{O}_r)$ are the irreducible components of $\mathrm{Ind}_{(B'^{r-1})^F}^{G^F}\mathrm{1}$.
\end{prop}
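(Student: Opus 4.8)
The plan is to characterize nilpotent representations directly from the orbit map $\Omega'$ and Frobenius reciprocity. Recall that a nilpotent orbit in $\mathfrak g^F$ is one whose closure contains $0$, equivalently (since we are in the finite setting, but the relevant structure is the geometric orbit) an orbit consisting of nilpotent elements. The key geometric input should be that an element $y \in \mathfrak g \cong G^{r-1}$ is nilpotent if and only if $y$ is conjugate into $\mathfrak u' = (U'^{r-1})$ for the chosen $F$-stable Borel $\mathbf B'$; this is the infinitesimal analogue of the fact that unipotent elements are exactly those conjugate into the unipotent radical of a Borel, and it transfers through the bijection $\epsilon$ guaranteed by \hyperlink{condition (*)}{(*)}. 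So the first step is: establish that the nilpotent cone of $\mathfrak g$ equals $\bigcup_{g \in G_1} g\,\mathfrak u'\, g^{-1}$, hence a $G_1^F$-orbit $\mathcal O \subseteq \mathfrak g^F$ is nilpotent iff $\mathcal O \cap (U'^{r-1})^F \neq \varnothing$.

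Next I would translate this into a statement about characters of $(G^{r-1})^F$ via the bilinear form $\mu$. Under the identification $y \mapsto \phi_y$, the orbit $\Omega(\sigma)$ corresponds to $\Omega'(\sigma)$, and $\sigma$ is nilpotent iff the $G_1^F$-orbit $\Omega'(\sigma)$ meets $(U'^{r-1})^F$. Now $(B'^{r-1})^F = (U'^{r-1})^F \rtimes (T'^{r-1})^F$, and since $G^{r-1}$ is abelian (it is $\mathfrak g$ as an additive group) with $B'^{r-1}$ acting on it by the adjoint action which is trivial on the $\pi^{r-1}$-level — wait, more carefully: $\mathrm{Res}^{G^F}_{(G^{r-1})^F}\mathrm{Ind}_{(B'^{r-1})^F}^{G^F}\mathbf 1$ is computed by Mackey, and the relevant point is that a character $\chi = \phi_y$ of $(G^{r-1})^F$ appears in $\mathrm{Ind}_{(B'^{r-1})^F}^{G^F}\mathbf 1$ iff $\chi$ restricted to $(B'^{r-1})^F \cap (G^{r-1})^F = (B'^{r-1})^F$... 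I should instead argue: by Frobenius reciprocity, $\langle \sigma, \mathrm{Ind}_{(B'^{r-1})^F}^{G^F}\mathbf 1\rangle = \langle \mathrm{Res}_{(B'^{r-1})^F}\sigma, \mathbf 1\rangle = \dim \sigma^{(B'^{r-1})^F}$. Since $(G^{r-1})^F \subseteq (B'^{r-1})^F$ is normal with trivial conjugation action from above it on characters being the adjoint action, $\sigma$ has a nonzero $(G^{r-1})^F$-fixed... no — $\sigma^{(B'^{r-1})^F} \neq 0$ iff the trivial character of $(G^{r-1})^F$ is in $\Omega(\sigma)$ after twisting, i.e. iff some $\phi_y \in \Omega(\sigma)$ is trivial on $(U'^{r-1})^F$, which by non-degeneracy of $\mu$ and the orthogonality relation between $\mathfrak u'$ and its $\mu$-perpendicular means $y$ lies in a $G_1^F$-translate of $(\mathfrak u')^{\perp}$; one then matches $(\mathfrak u')^\perp$ with the nilpotent directions using that $\mathbf B'$ is $F$-stable so $(\mathfrak b')^\perp = \mathfrak u'$ up to the identifications. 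Combining with Step 1 gives the claim: $\sigma$ nilpotent $\iff$ $\sigma$ is a component of $\mathrm{Ind}_{(B'^{r-1})^F}^{G^F}\mathbf 1$.

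The main obstacle I expect is the matching between the $\mu$-orthogonal complement $(\mathfrak u')^{\perp}$ and the set of nilpotent elements — more precisely, showing that a character $\phi_y$ of $(G^{r-1})^F$ is trivial on $(U'^{r-1})^F$ exactly when the $G_1^F$-orbit of $y$ consists of nilpotent elements. One direction (nilpotent $\Rightarrow$ conjugate into $\mathfrak u'$, so its character is trivial on the $\mu$-dual of $\mathfrak u'$, which is $\mathfrak u'$ itself when $\mathbf B'$ is $F$-stable) is the cleaner one and uses $\mu$-self-duality of the nilradical of a Borel together with Step 1. The converse requires knowing that $(\mathfrak u')^\perp = \mathfrak u'$ and that every adjoint orbit meeting $\mathfrak u'$ is nilpotent, which is exactly where $F$-stability of $\mathbf B'$ and \hyperlink{condition (*)}{(*)} are both used; I would cite \cite[2.4]{Hill_1995_Regular} for the precise form of this orbit–character correspondence rather than reprove it. The remaining bookkeeping — Mackey's formula reducing to the stated fixed-space computation, and the identification of $(G^{r-1})^F$-fixed vectors with the trivial-character isotypic component — is routine Clifford theory as already set up in Section~\ref{sec: preliminaries}.
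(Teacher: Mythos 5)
Your overall plan is the same as the paper's: reduce the question, via the orbit--character dictionary $y \mapsto \phi_y$, to showing (a) the characters of $(G^{r-1})^F$ trivial on $(B'^{r-1})^F$ are exactly the $\phi_y$ with $y \in \mathfrak{u}'^F$, and (b) every nilpotent adjoint $G_1^F$-orbit of $\mathfrak{g}^F$ meets $\mathfrak{u}'^F$. The paper proves (a) by showing $\mu(\mathfrak{u}',\mathfrak{b}') = 0$ (hence $(\mathfrak{b}')^\perp = \mathfrak{u}'$ by nondegeneracy and dimension count) via a direct $T'_1$-equivariance/root-weight argument; it proves (b) by transporting through $\epsilon$ to the unipotent variety and invoking Lang--Steinberg to get an $F$-rational conjugator. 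You have identified the same two ingredients, so this is not a different route. Two local issues prevent your write-up from going through as stated.

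First, you twice assert that ``$(\mathfrak{u}')^\perp = \mathfrak{u}'$'' and speak of ``$\mu$-self-duality of the nilradical.'' This is dimensionally impossible: $\dim(\mathfrak{u}')^\perp = \dim\mathfrak{g} - \dim\mathfrak{u}' = \dim\mathfrak{b}'$, so the correct statement -- which you do write correctly once -- is $(\mathfrak{u}')^\perp = \mathfrak{b}'$, equivalently $(\mathfrak{b}')^\perp = \mathfrak{u}'$. This is what guarantees that $\phi_y$ is trivial on $(B'^{r-1})^F$ precisely for $y \in \mathfrak{u}'^F$, and it still needs an argument (the paper's: decompose into root spaces and use $\mathrm{Ad}(T'_1)$-invariance of $\mu$); you cannot simply cite it as ``self-duality.''

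Second, your Step 1 concludes from ``the nilpotent cone equals $\bigcup_{g\in G_1} g\,\mathfrak{u}'\,g^{-1}$'' that ``a $G_1^F$-orbit $\mathcal{O}\subseteq\mathfrak{g}^F$ is nilpotent iff $\mathcal{O}\cap\mathfrak{u}'^F\neq\varnothing$.'' The forward implication is not immediate: the geometric statement gives a conjugator $g\in G_1(\overline{\mathbb{F}}_q)$, and one must show $g$ can be taken in $G_1^F$. This requires the extra input the paper supplies: an $F$-rational nilpotent $\nu$ corresponds under $\epsilon$ to an $F$-rational unipotent lying in the unipotent radical of \emph{some} $F$-stable Borel $B''$, and then two $F$-stable Borels are conjugate by an $F$-rational element by Lang--Steinberg (self-normality of Borels plus surjectivity of the Lang map). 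Eliding this leaves a genuine gap; the statement is true, but ``transfers through $\epsilon$'' does not establish the rationality of the conjugation.
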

\begin{proof}
This was originally proved in the case $\mathbb{G}=\mathrm{GL}_n$ (though, with $\mathcal{O}=\mathbb{Z}_p$) by Hill in \cite[2.12]{Hill_1993_Jordan}; we would generalise his argument to our situation.

\vspace{2mm} Let $\mathfrak{b}'$, $\mathfrak{u}'$, and $\mathfrak{t}'$ be the Lie algebra of $B'_1$, $U'_1$, and $T'_1$, respectively; we also identify them with $B'^{r-1}$, $U'^{r-1}$, and $T'^{r-1}$. We first prove that
\begin{equation}\label{temp:bilinear form}
\mu(\mathfrak{u}',\mathfrak{b}')=0.
\end{equation}
By linearity it suffices to show that $\mu(\mathfrak{u}'_{\alpha},\mathfrak{u}'_{\beta})=0$ and $\mu(\mathfrak{u}'_{\alpha},\mathfrak{t}')=0$ for any positive roots $\alpha$ and $\beta$ of $T'_1$ with respect to $B'_1$, where $\mathfrak{u}'_{\alpha}$ denote the corresponding Lie algebra of the root subgroup $(U'_1)_{\alpha}$. Let $\mathrm{Ad}(g)$ denotes the adjoint action of $g\in G_1$ on $\mathfrak{g}$. Then for any $t'\in T'_1$, $\tau'\in \mathfrak{t}'$, $u\in \mathfrak{u}'_{\alpha}$, and $v\in \mathfrak{u}'_{\beta}$ we have:
$$\mu(u,\tau')=\mu(\mathrm{Ad}(t')u,\mathrm{Ad}(t')\tau')=\alpha(t')\mu(u,\tau')$$
and
$$\mu(u,v)=\mu(\mathrm{Ad}(t')u,\mathrm{Ad}(t')v)=\alpha(t')\beta(t')\mu(u,v),$$
which imply that 
$$\mu(\mathfrak{u}'_{\alpha},\mathfrak{u}'_{\beta})=0=\mu(\mathfrak{u}'_{\alpha},\mathfrak{t}'),$$
so \eqref{temp:bilinear form} holds.

\vspace{2mm} By \eqref{temp:bilinear form}, for any $y\in \mathfrak{u}'^F$ we have $\phi_y|_{(B'^{r-1})^F}=1$ (see the notation in Section~\ref{sec: preliminaries}), thus Frobenius reciprocity and dimension counting imply that
$$\mathrm{Ind}_{(B'^{r-1})^F}^{(G^{r-1})^F}1=\bigoplus_{y\in \mathfrak{u}'^F}\phi_y,$$
hence
\begin{equation}\label{temp: final formula in nilp lemma}
\mathrm{Ind}_{(B'^{r-1})^F}^{G^F}1=\bigoplus_{y\in \mathfrak{u}'^F}\mathrm{Ind}_{(G^{r-1})^F}^{G^F}\phi_y.
\end{equation}
Now let $\nu\in\mathfrak{g}^F$ be a nilpotent element, then by \cite[3.20]{DM1991} and \cite[14.26]{Borel_1991_LinearAlgGp} (see also Grothendieck's covering theorem \cite[XIV~4.11]{SGA3}) there is a $\upsilon\in (U'')^F$ such that $\nu=\epsilon(\upsilon)$, where $U''$ is the unipotent radical of an $F$-stable Borel subgroup $B''$ of $G_1$. Meanwhile, as any two Borel subgroups are conjugate, there is a $g\in G_1$ with $gB''g^{-1}=B'_1$; we shall show that such a $g$ can be chosen to be an element in $G_1^F$. Indeed, by the $F$-rationality of $B'$ and $B''$ we have 
$$F(g)B''F(g^{-1})=gB''g^{-1},$$
thus $g^{-1}F(g)\in B''$ by the self-normality of Borel subgroups, so, according to the Lang--Steinberg theorem there is a $b\in B''$ such that $g^{-1}F(g)=b^{-1}F(b)$. In particular, $F(gb^{-1})=gb^{-1}$ and $(gb^{-1})B''(gb^{-1})^{-1}=B'_1$, thus every nilpotent $G_1^F$-orbit of $\mathfrak{g}^F$ intersects $\mathfrak{u}'^F$, so the proposition follows from \eqref{temp: final formula in nilp lemma}. 
\end{proof}

Now we can prove:

\begin{thm}\label{thm: non-nilpotency}
Let $\mathcal{Z}$ be as in Lemma~\ref{lemm: inner prod}, and let $\theta$ be regular and in general position. If $\theta$ is non-trivial on $\mathcal{Z}^F$, then the higher Deligne--Lusztig representation $R_{T,U}^{\theta}$, up to a sign, is not nilpotent.
\end{thm}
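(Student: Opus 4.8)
The plan is to argue by contradiction, combining Proposition~\ref{prop: nilpotent reps}, Lemma~\ref{lemm: inner prod}, and a standard identification of a Lang fibre. Suppose that $\pm R_{T,U}^{\theta}$ --- a genuine irreducible representation of $\mathbb{G}(\mathcal{O}_r)$ by Theorem~\ref{thm: irr of generic DL}, since $\theta$ is regular and in general position --- is nilpotent. Choose an $F$-stable Borel subgroup $\mathbf{B}'$ of $\mathbf{G}$ (one exists: the residue group $G_1$ is quasi-split over $\mathbb{F}_q$ by Steinberg, and a Borel lifts along the nilpotent thickening $\mathbb{F}_q\hookrightarrow\mathcal{O}_r$ by smoothness of the scheme of Borels), together with an $F$-stable maximal torus $\mathbf{T}'\subseteq\mathbf{B}'$. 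By Proposition~\ref{prop: nilpotent reps}, $\pm R_{T,U}^{\theta}$ is then an irreducible constituent of $\mathrm{Ind}_{(B'^{r-1})^F}^{G^F}\mathrm{1}$, so $\langle \mathrm{Ind}_{(B'^{r-1})^F}^{G^F}\mathrm{1},\, R_{T,U}^{\theta}\rangle_{G^F}\neq 0$; equivalently, by Frobenius reciprocity, $R_{T,U}^{\theta}$ carries a non-zero $(B'^{r-1})^F$-fixed vector.

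The next step is to recognise this induced module as the Euler characteristic occurring in Lemma~\ref{lemm: inner prod}. Since $\mathbf{B}'$ is $F$-stable we have $FB'^{r-1}=B'^{r-1}$, and $B'^{r-1}$ is a connected unipotent group --- as an algebraic group it is the additive group of $\mathfrak{b}'$. The morphism $g\mapsto gB'^{r-1}$ then exhibits $L^{-1}(B'^{r-1})$ as a disjoint union of cosets $gB'^{r-1}$, each an affine space, indexed by $(G/B'^{r-1})^F=G^F/(B'^{r-1})^F$ (Lang's theorem for the connected group $B'^{r-1}$), with $G^F$ permuting the components by left translation exactly as it acts on $G^F/(B'^{r-1})^F$. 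As compactly supported cohomology of an affine space is one-dimensional and concentrated in a single even degree, the alternating sum collapses to the permutation module $\overline{\mathbb{Q}}_{\ell}[G^F/(B'^{r-1})^F]$, i.e.
\[
\sum_i(-1)^iH_c^{i}\!\left(L^{-1}(FB'^{r-1}),\overline{\mathbb{Q}}_{\ell}\right)\;\cong\;\mathrm{Ind}_{(B'^{r-1})^F}^{G^F}\mathrm{1}
\]
as $G^F$-representations. Now the hypothesis $\mathcal{Z}\neq 1$ of Lemma~\ref{lemm: inner prod} is automatic here (were $\mathcal{Z}=1$, then $\theta$ could not be non-trivial on $\mathcal{Z}^F$), and $\theta$ is non-trivial on $\mathcal{Z}^F$ by assumption; hence Lemma~\ref{lemm: inner prod} forces $\langle \mathrm{Ind}_{(B'^{r-1})^F}^{G^F}\mathrm{1},\, R_{T,U}^{\theta}\rangle_{G^F}=0$, contradicting the previous paragraph. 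Therefore $\pm R_{T,U}^{\theta}$ is not nilpotent.

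I do not expect a genuine obstacle: once Lemma~\ref{lemm: inner prod} and Proposition~\ref{prop: nilpotent reps} are in hand, the theorem is essentially formal, the conceptual content being that non-nilpotency of an irreducible $\sigma$ is precisely the vanishing $\langle\mathrm{Ind}_{(B'^{r-1})^F}^{G^F}\mathrm{1},\sigma\rangle_{G^F}=0$, which Lemma~\ref{lemm: inner prod} guarantees as soon as $\theta$ detects the connected part of the first congruence subgroup of the centre. The two points needing some care are: (i) producing an $F$-stable Borel $\mathbf{B}'$, so that Proposition~\ref{prop: nilpotent reps} applies; and (ii) spelling out the cohomological identification above, which is clean only because $\mathbf{B}'$ is $F$-stable --- for a non-$F$-stable Borel the variety $L^{-1}(FB'^{r-1})$ is genuinely of Deligne--Lusztig type and its top cohomology is not a permutation module.
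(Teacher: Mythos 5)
Your proof is correct and follows essentially the same route as the paper's: obtain an $F$-stable $\mathbf{B}'$ (your quasi-splitness plus smoothness-of-Borels argument is the same in substance as the paper's Lang--Steinberg plus formal smoothness of strict transporters), identify $\sum_i(-1)^iH_c^i(L^{-1}(B'^{r-1}))$ with $\mathrm{Ind}_{(B'^{r-1})^F}^{G^F}\mathrm{1}$ via the fibration $x\mapsto xB'^{r-1}$, and combine Lemma~\ref{lemm: inner prod}, Proposition~\ref{prop: nilpotent reps}, and Theorem~\ref{thm: irr of generic DL}. Casting it as a proof by contradiction rather than a direct one is a cosmetic difference.
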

\begin{proof}
In this proof we assume that $\mathbf{B}'$ is $F$-stable. It is known that one can always find such a $\mathbf{B}'$; for example, this follows from the Lang--Steinberg theorem (see e.g.\ the argument in \cite[1.17]{Carter1993FiGrLieTy}) via the formal smoothness of strict transporters (see \cite[4.15]{Sta2012ReductiveGr} and \cite[XXII 5.3.9]{SGA3}). 

\vspace{2mm} Consider the morphism $L^{-1}(FB'^{r-1})=L^{-1}(B'^{r-1})\rightarrow G/B'^{r-1}$ given by
$$x\longmapsto xB'^{r-1}.$$
Note that its image is $G^F/(B'^{r-1})^F$, and the fibres are isomorphic to the affine space $B'^{r-1}$, thus, as $G^F$-modules we have
$$\sum_{i}(-1)^iH_c^{i}(L^{-1}(FB'^{r-1})\cong \overline{\mathbb{Q}}_{\ell}[G^F/(B'^{r-1})^F]=\mathrm{Ind}_{(B'^{r-1})^F}^{G^F}\mathrm{1}.$$
Now the assertion follows from Lemma~\ref{lemm: inner prod}, Proposition~\ref{prop: nilpotent reps}, and Theorem~\ref{thm: irr of generic DL}.
\end{proof}

Note that the above result also holds for the Deligne--Lusztig-type representations $R_{T,U,b}^{\theta}$ considered in \cite{Chen_2017_InnerProduct}, via an almost same argument.

\section{Orbits of general and special linear groups}\label{sec:GL_n and SL_n}

Throughout this section let $\mathbb{G}=\mathrm{SL}_n$ with $p\nmid n$ (note that in this case $Z(G)^1$ is trivial, so Theorem~\ref{thm: non-nilpotency} does not work), and take  $\mu$ to be the trace form $\mathrm{Tr}(-\cdot-)$ (which is non-degenerate on $G^{r-1}$ as $p\nmid n$).

\vspace{2mm} Let $\widetilde{\mathbb{G}}$ be $\mathrm{GL}_n$ over $\mathcal{O}_r$, and let $\widetilde{\mathbf{G}}$ be its base change to $\mathrm{Spec}~\overline{\mathbb{F}}_q[[\pi]]/\pi^r$, then we have a natural closed immersion $i\colon \mathbf{G}\rightarrow\widetilde{\mathbf{G}}$. Let $\widetilde{\mathbf{B}}=\widetilde{\mathbf{U}}\rtimes \widetilde{\mathbf{T}}$ be the Borel subgroup of $\widetilde{\mathbf{G}}$ such that $i^{-1}(\widetilde{\mathbf{B}})=\mathbf{B}$, $i^{-1}(\widetilde{\mathbf{T}})=\mathbf{T}$, and $i^{-1}(\widetilde{\mathbf{U}})=\mathbf{U}$, where $\widetilde{\mathbf{U}}$ is the unipotent radical and $\widetilde{\mathbf{T}}$ a maximal torus. Let $\widetilde{G}$, $\widetilde{B}$, $\widetilde{U}$, and $\widetilde{T}$ be the corresponding algebraic groups (note that we have $\widetilde{U}\cong U$ and $F\widetilde{T}=\widetilde{T}$); we again use $i$ to denote the closed immersion $G\rightarrow \widetilde{G}$. The natural geometric Frobenius and the associated Lang isogeny, on $\widetilde{G}$, will still be denoted by $F$ and $L$, respectively. 

\vspace{2mm} Motivated by the regular embedding trick (see \cite{Lusztig_1988_Rep_red_gp_disconnectedcentre} and \cite{Geck_Malle_2016_1stchapter}) we give the following relation between $R_{T,U}^{\theta}$ and $R_{\widetilde{T}, \widetilde{U}}^{\widetilde{\theta}}$.

\begin{prop}\label{prop: regularity for SL_n}
Let $\theta$ be regular and in general position, then, up to a sign, the orbits $\Omega'(R_{T,U}^{\theta})$ and $\Omega'(R_{\widetilde{T}, \widetilde{U}}^{\widetilde{\theta}})$, viewed as subsets of $M_n({\mathbb{F}}_q)$, are different up to a shift by a scalar matrix. In particular, $R_{T,U}^{\theta}$, up to a sign, is a regular orbit representation if and only if $R_{\widetilde{T}, \widetilde{U}}^{\widetilde{\theta}}$ is so for some $\widetilde{\theta}$ restricting to $\theta$.
\end{prop}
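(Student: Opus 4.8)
The plan is to exploit the regular embedding $i\colon G\hookrightarrow\widetilde G$ and the fact that, on the Lie algebra side, it induces $\mathfrak{sl}_n\hookrightarrow\mathfrak{gl}_n=M_n$ with $\mathfrak{gl}_n=\mathfrak{sl}_n\oplus\mathfrak z$, where $\mathfrak z$ is the (one-dimensional) centre of scalar matrices (using $p\nmid n$). Concretely, first I would set up the comparison of the two Deligne--Lusztig varieties: since $\widetilde U\cong U$ and $i$ identifies the relevant Borel data, the variety $L^{-1}(F\widetilde U)$ for $\widetilde G$ sits over $L^{-1}(FU)$ for $G$, and in fact $\widetilde G=G\cdot\widetilde T$ with $G\cap\widetilde T=T$, so $\widetilde G^F/G^F\cong\widetilde T^F/T^F$ is abelian. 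The standard regular-embedding argument (as in \cite{Lusztig_1988_Rep_red_gp_disconnectedcentre}, \cite{Geck_Malle_2016_1stchapter}) then shows $R^{\widetilde\theta}_{\widetilde T,\widetilde U}|_{G^F}$ decomposes into higher Deligne--Lusztig representations $R^{\theta'}_{T,U}$ for the $\widetilde T^F$-conjugates $\theta'$ of $\theta$, and — because $\theta$ is in general position — each appears once; dually, $R^\theta_{T,U}$ is (up to sign) the restriction of a unique $R^{\widetilde\theta}_{\widetilde T,\widetilde U}$ for any $\widetilde\theta$ with $\widetilde\theta|_{T^F}=\theta$.

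Next I would track what restriction does at the level of the orbit map $\Omega'$. For an irreducible $\widetilde\sigma$ of $\widetilde G^F$ whose restriction to $G^F$ contains $\sigma$, Clifford theory applied to the nested reduction kernels $\widetilde G^{r-1}\supseteq G^{r-1}$ says that the $\widetilde G_1^F$-orbit $\Omega(\widetilde\sigma)\subseteq\mathrm{Irr}((\widetilde G^{r-1})^F)$ restricts to the $G_1^F$-orbit $\Omega(\sigma)\subseteq\mathrm{Irr}((G^{r-1})^F)$. Translating through the pairing $y\mapsto\phi_y=\phi(\mathrm{Tr}(-\,\cdot\,y))$, restricting the character $\phi_{\widetilde y}$ on $\mathfrak{gl}_n^F$ to $\mathfrak{sl}_n^F$ amounts to taking the image of $\widetilde y$ under the projection $M_n\to M_n/\mathfrak z=\mathfrak{sl}_n^{\,*}$ (identified with $\mathfrak{sl}_n$ via the trace form). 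Hence $\Omega'(\sigma)\subseteq\mathfrak{sl}_n^F$ is the image of $\Omega'(\widetilde\sigma)\subseteq M_n(\mathbb F_q)$ modulo scalar matrices; equivalently, lifting back, the $G_1^F$-orbit of $\Omega'(R^\theta_{T,U})$ and the $\widetilde G_1^F$-orbit of $\Omega'(R^{\widetilde\theta}_{\widetilde T,\widetilde U})$, as subsets of $M_n(\mathbb F_q)$, differ by a shift by a scalar matrix (the ``$+\mathfrak z$'' ambiguity collapses to a single coset since adjoint orbits in $M_n$ are stable under adding scalars only if one subtracts the appropriate trace/n term — this is exactly the ``shift by a scalar matrix'' in the statement). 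I should be careful to phrase the scalar as $\tfrac{1}{n}\mathrm{Tr}(\widetilde y)\cdot I_n$ so that the shifted element is genuinely traceless.

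Finally, for the ``regular'' equivalence I would invoke that the adjoint orbit of $x\in\mathfrak{gl}_n$ is regular (centraliser of minimal dimension $n$) if and only if that of $x-\lambda I_n$ is, for any scalar $\lambda$, since conjugation and translation by the centre commute and the centraliser is unchanged; and an element of $\mathfrak{sl}_n$ is regular there iff it is regular in $\mathfrak{gl}_n$ (again $p\nmid n$ makes the decomposition clean). Combined with the bijection between $R^\theta_{T,U}$ and the $R^{\widetilde\theta}_{\widetilde T,\widetilde U}$ restricting to it, this gives the ``if and only if.'' I expect the main obstacle to be the bookkeeping in the second paragraph: making the compatibility of Clifford theory for $G^F$ versus $\widetilde G^F$ with the two orbit maps $\Omega'$ fully precise, in particular verifying that the bijection $y\mapsto\phi_y$ is compatible with the trace-form identifications on both groups simultaneously, and pinning down the scalar-matrix shift rather than merely a coset of $\mathfrak z$. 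The regular-embedding decomposition of $R^{\widetilde\theta}_{\widetilde T,\widetilde U}|_{G^F}$ in the higher-level setting may also need a short justification (e.g.\ via the Mackey-type inner product formula of \cite{Chen_2017_InnerProduct} together with Theorem~\ref{thm: irr of generic DL}), but this should be routine.
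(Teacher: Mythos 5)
Your overall strategy matches the paper's: use the regular embedding $G\hookrightarrow\widetilde G$, relate the two Deligne--Lusztig representations by restriction, and then track the orbit maps $\Omega'$ through the trace form to produce the scalar shift. The second half of your argument (the Lie-algebra bookkeeping via $\mathfrak{gl}_n=\mathfrak{sl}_n\oplus\mathfrak z$ and $y\mapsto\phi_y$) is essentially the computation the paper carries out.

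However, there is a genuine gap at the crux of the argument, which you flag but underestimate. You assert that the ``standard regular-embedding argument'' gives the decomposition of $R^{\widetilde\theta}_{\widetilde T,\widetilde U}|_{G^F}$, citing \cite{Lusztig_1988_Rep_red_gp_disconnectedcentre} and \cite{Geck_Malle_2016_1stchapter}; those references treat the level-one case, and the identity you need at level $r$, namely $R^\theta_{T,U}=\mathrm{Res}^{\widetilde G^F}_{G^F}R^{\widetilde\theta}_{\widetilde T,\widetilde U}$, is precisely what occupies most of the paper's proof. The paper establishes it by adapting \cite[13.22]{DM1991} to level $r$: one passes through Brou\'e's bimodule induction formula and the formal power series interpretation of Lefschetz numbers, and then performs a counting argument that exploits $\widetilde G=G\cdot\widetilde T$ with $G\cap\widetilde T=T$ (so that each fixed point for $\widetilde G$ that contributes has its torus element forced into $T$). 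Your suggested alternative via the inner product formula of \cite{Chen_2017_InnerProduct} does not obviously go through: that formula computes inner products of Deligne--Lusztig characters of a single group, whereas $\mathrm{Res}^{\widetilde G^F}_{G^F}R^{\widetilde\theta}_{\widetilde T,\widetilde U}$ is not a priori a Deligne--Lusztig character of $G^F$, so you would be assuming what you want to prove. A smaller point: the phrasing about a decomposition indexed by ``$\widetilde T^F$-conjugates $\theta'$ of $\theta$'' is off, since $\widetilde T^F$ is abelian and hence conjugation acts trivially on $\widehat{T^F}$; what is actually needed (and what the paper proves) is a single identity of virtual characters, not a sum over distinct $R^{\theta'}_{T,U}$.
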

\begin{proof}
We first prove $R_{T,U}^{\theta}=\mathrm{Res}^{\widetilde{G}^F}_{G^F}R_{\widetilde{T}, \widetilde{U}}^{\widetilde{\theta}}$ following the argument of \cite[13.22]{DM1991}. Let $g\in G^F=\mathrm{SL}_n(\mathbb{F}_q[[\pi]]/\pi^r)$, then by Brou\'e's bimodule induction formula (see \cite[Chapter~4]{DM1991}) we have
\begin{equation*}
\mathrm{Tr}(R_{T,U}^{\theta},g)=\frac{1}{|T^F|}\cdot \sum_{t\in T^F}\theta(t^{-1})\cdot \mathrm{Tr}\left((g,t)  \mid \sum_{i}(-1)^iH_c^i(L^{-1}(FU),\overline{\mathbb{Q}}_{\ell})  \right)
\end{equation*}
and
\begin{equation*}
\mathrm{Tr}(\mathrm{Res}^{\widetilde{G}^F}_{G^F}R_{\widetilde{T}, \widetilde{U}}^{\widetilde{\theta}},g)=\frac{1}{|\widetilde{T}^F|}\cdot \sum_{t'\in \widetilde{T}^F}\widetilde{\theta}(t'^{-1})\cdot \mathrm{Tr}\left((g,t')  \mid \sum_{i}(-1)^iH_c^i(L^{-1}(F\widetilde{U}),\overline{\mathbb{Q}}_{\ell})  \right).
\end{equation*}
By the formal power series interpretation of Lefschetz numbers (see \cite[1.2]{Lusztig_whiteBk}), to prove that these two values are the same, it suffices to show that:
\begin{equation}\label{temp: SLn 1}
\frac{1}{|T^F|}\sum_{t\in T^F} \theta(t^{-1}) \#\{ x\in G\mid L(x)\in FU,\ gF^d(x)t=x \}
\end{equation}
equals to
\begin{equation}\label{temp: SLn 2}
\frac{1}{|\widetilde{T}^F|}\sum_{t'\in \widetilde{T}^F} \widetilde{\theta}(t'^{-1}) \#\{ x\in \widetilde{G}\mid L(x)\in F\widetilde{U},\ i(g)F^d(x)t'=x \},
\end{equation}
for every $d\in\mathbb{Z}_{>0}$ such that $U$ and $\widetilde{U}$ are defined over $\mathbb{F}_{q^d}$. Given $x\in \widetilde{G}$ with $L(x)\in F\widetilde{U}$, consider the pairs $(y,\lambda)\in i(G)\times \widetilde{T}^F$ such that $y=x\lambda$. There are $|T^F|$ such pairs, so we can rewrite \eqref{temp: SLn 2} as
\begin{equation*}
\frac{1}{|\widetilde{T}^F|}\frac{1}{|T^F|}\sum_{t'\in \widetilde{T}^F,\ \lambda\in \widetilde{T}^F} \widetilde{\theta}(t'^{-1}) \#\{ y\in i(G)\mid L(y)\in F\widetilde{U},\ i(g)F^d(y\lambda^{-1})t'\lambda=y \},
\end{equation*}
which, by the variable change $t''\mapsto \lambda^{-1}t'\lambda$, equals to
\begin{equation*}
\frac{1}{|T^F|}\sum_{t''\in \widetilde{T}^F} \widetilde{\theta}(t''^{-1}) \#\{ y\in i(G) \mid L(y)\in F\widetilde{U},\ i(g)F^d(y)t''=y \}.
\end{equation*}
This last sum equals to \eqref{temp: SLn 1}, because $i(g)F^d(y)t''=y$ implies $t'' \in i(G)\cap\widetilde{T}=T$. Therefore $R_{T,U}^{\theta}=\mathrm{Res}^{\widetilde{G}^F}_{G^F}R_{\widetilde{T}, \widetilde{U}}^{\widetilde{\theta}}$, as desired.

\vspace{2mm} Now write $\mathrm{Res}^{\widetilde{G}^F}_{(\widetilde{G}^{r-1})^F}R_{\widetilde{T}, \widetilde{U}}^{\widetilde{\theta}}$ as the sum of $\widetilde{G}^F$-orbit of $\phi(\mathrm{Tr}(-\cdot z_1))$ for some non-nilpotent $z_1\in M_n({\mathbb{F}}_q)$, and write $\mathrm{Res}^{{G}^F}_{({G}^{r-1})^F}R_{{T}, {U}}^{{\theta}}$ as the sum of ${G}^F$-orbit of $\phi(\mathrm{Tr}(-\cdot z_2))$ for some traceless $z_2\in M_n({\mathbb{F}}_q)$, then $R_{T,U}^{\theta}=\mathrm{Res}^{\widetilde{G}^F}_{G^F}R_{\widetilde{T}, \widetilde{U}}^{\widetilde{\theta}}$ implies 
$$\mathrm{Tr}(z(hz_1h^{-1}-z_2))=0$$ 
for all traceless $z\in M_n({\mathbb{F}}_q)$ for some $h\in \mathrm{GL}_n({\mathbb{F}}_q)$. So by direct computations we see $hz_1h^{-1}=z_2+c$ for some $c=\mathrm{diag}(a,...,a)\in M_n({\overline{\mathbb{F}}}_q)$. In particular, the centraliser of $z_1$ in $G_1$ and the centraliser of $z_2$ in $G_1$ are isomorphic, which implies the theorem.
\end{proof}

Combine the above theorem with the results in \cite{ChenStasinski_2016_algebraisation} we immediately see that, when $r$ is even and $\mathbf{T}$ is Coxeter, the higher Deligne--Lusztig representation $R_{T,U}^{\theta}$ of $\mathrm{SL}_n(\mathbb{F}_q[[\pi]]/\pi^r)$ is regular and semisimple if $\theta$ is regular and in general position.

\section{Flags and Deligne--Lusztig constructions}\label{sec: DL for admissible triple}

In the case $r=1$, a Deligne--Lusztig variety is by definition the Lang isogeny twist of a Bruhat cell, that is, the variety $L^{-1}(B_0wB_0)/B_0$, where $B_0$ is an $F$-stable Borel subgroup of $G$ and $w$ is an element in the Weyl group of an $F$-stable maximal torus $T_0\subseteq B_0$; while this definition admits fruitful geometric/combinatorial flavour, in the study of its cohomology spaces, one usually passes to certain affine bundle over a $T_0^{wF}$-torsor of the variety. More precisely, in the study of cohomology as a representation space one usually focuses on the variety $L^{-1}(FU)$, which can be described simply as:
\begin{itemize}
\item[(P)] The preimage of the unipotent radical of a (not necessarily $F$-stable) Borel subgroup along a Lang map.
\end{itemize}

\vspace{2mm} In the general $r\geq 1$ case, we followed this treatment and took the Deligne--Lusztig representations to be $\sum_i(-1)^iH_c^i(L^{-1}(FU),\overline{\mathbb{Q}}_{\ell})_{\theta}$, as in Definition~\ref{defi: DL rep}. However, unless $r=1$, property~(P) is no more true at the level of algebraic groups, namely, the unipotent group $U=U_r$ is no more the unipotent radical of $B=B_r$ (as $T=T_r\cong B/U$ has a non-trivial unipotent radical $T^1$).

\vspace{2mm} In the remaining part of this section we assume $\mathbb{G}=\mathrm{GL}_n$ or $\mathrm{SL}_n$. 

\vspace{2mm} We want to find a natural analogue of Deligne--Lusztig construction for $\mathbb{G}$, such that it fulfils the following four requirements in a suitable way:
\begin{itemize}
\item The involved family of varieties contains the varieties $L^{-1}(FU)$ for all $U$;
\item all the nilpotent representations can be afforded by the cohomologies of the involved varieties;
\item property~(P) holds in a natural way;
\item if $r=1$, then it coincides with the classical Deligne--Lusztig theory \cite{DL1976}.
\end{itemize}
Note that for general and special linear groups over an algebraically closed field, a Borel subgroup can be viewed as the stabiliser of a complete flag over the field; one of the starting points of our construction is to simulate Lusztig's idea ``view an object over ${\mathcal{O}}_r$ as an object over the residue field ${\mathbb{F}}_q$'' for complete flags.

\vspace{2mm} Consider the $\overline{\mathbb{F}}_q[[\pi]]/\pi^r$-module $(\overline{\mathbb{F}}_q[[\pi]]/\pi^r)^n$, on which $G$ acts in a natural way. Here we view $(\overline{\mathbb{F}}_q[[\pi]]/\pi^r)^n$ as a vector space over $\overline{\mathbb{F}}_q$. By an $\overline{\mathbb{F}}_q$-flag $\mathcal{F}$ of $(\overline{\mathbb{F}}_q[[\pi]]/\pi^r)^n$ we mean a sequence of linear subspaces over $\overline{\mathbb{F}}_q$:
$$\{ 0 \}=:V_0\subseteq V_1\subseteq ... \subseteq V_{nr}:=(\overline{\mathbb{F}}_q[[\pi]]/\pi^r)^n$$
with $\dim_{\overline{\mathbb{F}}_q} V_i=i$. In particular, here we restrict to the complete flags. We call $\mathcal{F}$ \emph{admissible} if its stabiliser in $G$ is non-reductive; when this is the case we denote by $B_{\mathcal{F}}$ its stabiliser and $U_{\mathcal{F}}$ the unipotent radical of $B_{\mathcal{F}}$. 

\vspace{2mm} Now consider the triple $(\mathcal{F},C,\theta)$, where $\mathcal{F}$ is an admissible flag, $C$ an $F$-stable Cartan subgroup of $N_G(U_{\mathcal{F}})^{\circ}$ (recall that a Cartan subgroup is by definition the centraliser of a maximal torus), and $\theta\in\widehat{C^F}$ an irreducible $\overline{\mathbb{Q}}_{\ell}$-character of $C^F$; we call such a triple an \emph{admissible} triple. Note that $L^{-1}(FU_{\mathcal{F}})$ admits the left translation action of $G^F=\mathbb{G}(\mathcal{O}_r)$ and the right translation action of $C^F$.

\begin{defi}
For an admissible triple $(\mathcal{F},C,\theta)$, we put 
$$R_{\mathcal{F},C}^{\theta}:=\sum_i(-1)^{i}H_c^i(L^{-1}(FU_{\mathcal{F}}),\overline{\mathbb{Q}}_{\ell})\otimes_{\overline{\mathbb{Q}}_{\ell}[C^F]}\theta,$$
which is a virtual representation of $\mathbb{G}(\mathcal{O}_r)$.
\end{defi}

In the below, we show by an explicit argument that $R_{\mathcal{F},C}^{\theta}$ yields all the higher Deligne--Lusztig representations and affords the nilpotent representations; in particular, this construction fulfils the above four requirements.

\begin{thm}\label{thm: admissible DL rep}
We have:
\begin{itemize}
\item[(i)] If $\mathbb{G}=\mathrm{SL}_n$ or $\mathrm{GL}_n$, then each higher Deligne--Lusztig representation $R_{T,U}^{\theta}$ is the representation $R_{\mathcal{F},C}^{\theta}$ for an admissible triple $(\mathcal{F},C,\theta)$;
\item[(ii)] Suppose either $\mathbb{G}=\mathrm{SL}_n$ and $p\nmid n$, or $\mathbb{G}=\mathrm{GL}_n$ (so that \hyperlink{condition (*)}{{\bf (*)}} holds), then each nilpotent representation is an irreducible constituent of $R_{\mathcal{F},C}^{\theta}$ for some admissible triple $(\mathcal{F},C,\theta)$.
\end{itemize}
\end{thm}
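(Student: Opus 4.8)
The plan is to prove the two parts separately, using explicit matrix descriptions peculiar to $\mathrm{GL}_n$ and $\mathrm{SL}_n$.

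For part (i), I would start from a maximal torus $\mathbf{T}\subseteq\mathbf{B}=\mathbf{U}\rtimes\mathbf{T}$ of $\mathbf{G}$ and produce from the pair $(\mathbf{T},\mathbf{U})$ an admissible $\overline{\mathbb{F}}_q$-flag $\mathcal{F}$ with $U_{\mathcal{F}}=U$ and a Cartan subgroup $C$ of $N_G(U)^\circ$ with $C^F=T^F$. Concretely, over $\mathcal{O}_r$ a Borel subgroup stabilises a chain of free submodules of $(\overline{\mathbb{F}}_q[[\pi]]/\pi^r)^n$; refining each step of that chain by the $\pi$-adic filtration (multiplication by powers of $\pi$) produces a genuine complete $\overline{\mathbb{F}}_q$-flag $\mathcal{F}$ of the $\overline{\mathbb{F}}_q$-vector space $(\overline{\mathbb{F}}_q[[\pi]]/\pi^r)^n$ of length $nr$. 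One then checks: (a) the $G$-stabiliser $B_{\mathcal{F}}$ of $\mathcal{F}$ equals $B$ (the $\pi$-adic refinement is canonical, so nothing new stabilises it, and $B$ does), hence is non-reductive, so $\mathcal{F}$ is admissible; (b) the unipotent radical $U_{\mathcal{F}}$ of $B_{\mathcal{F}}=B$ is exactly $U$ — here I use that $T=T_r$ has unipotent radical $T^1$ while $U$ is the full unipotent radical of $B$, which is precisely the point flagged in the discussion of property~(P); (c) $N_G(U)^\circ$ has $B=U\rtimes T$ inside it and $T$ is a Cartan subgroup (centraliser of a maximal torus) of $N_G(U)^\circ$ that is $F$-stable, so we may take $C=T$. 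Then $L^{-1}(FU_{\mathcal{F}})=L^{-1}(FU)$ with matching $C^F=T^F$ right actions, and $\theta$-isotypical part versus $\otimes_{\overline{\mathbb{Q}}_\ell[C^F]}\theta$ agree since $\theta$ is one-dimensional; this gives $R_{\mathcal{F},C}^{\theta}=R_{T,U}^{\theta}$.

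For part (ii), by Proposition~\ref{prop: nilpotent reps} (applied to an $F$-stable Borel $\mathbf{B}'=\mathbf{U}'\rtimes\mathbf{T}'$, which exists as in the proof of Theorem~\ref{thm: non-nilpotency}) the nilpotent representations are exactly the irreducible constituents of $\mathrm{Ind}_{(B'^{r-1})^F}^{G^F}\mathbf{1}$. As in the proof of Theorem~\ref{thm: non-nilpotency}, the map $x\mapsto xB'^{r-1}$ exhibits $L^{-1}(FB'^{r-1})$ as an affine bundle over $G^F/(B'^{r-1})^F$, so
\begin{equation*}
\sum_i(-1)^iH_c^i(L^{-1}(FB'^{r-1}),\overline{\mathbb{Q}}_\ell)\cong\mathrm{Ind}_{(B'^{r-1})^F}^{G^F}\mathbf{1}.
\end{equation*}
So it suffices to realise $L^{-1}(FB'^{r-1})$ (with trivial $\theta$) as $L^{-1}(FU_{\mathcal{F}})$ for an admissible flag $\mathcal{F}$ with $U_{\mathcal{F}}=B'^{r-1}$ and to find an $F$-stable Cartan subgroup $C\subseteq N_G(B'^{r-1})^\circ$; taking $\theta=\mathbf{1}$ then yields every nilpotent representation as a constituent of $R_{\mathcal{F},C}^{\mathbf{1}}$. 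The group $B'^{r-1}$ is the $(r-1)$st congruence subgroup of $B'$, an abelian unipotent group (it is $\mathfrak{b}'$ viewed additively); I must show it is of the form $U_{\mathcal{F}}$ for some complete $\overline{\mathbb{F}}_q$-flag. The natural candidate is the flag whose stabiliser is $B'^{r-1}\rtimes(G_1\text{-Borel lift})$ — more precisely, one modifies the flag built in (i) for $B'$ by coarsening so that only the top level ($\pi^{r-1}$-part) of the filtration is remembered as "upper triangular", making the unipotent radical of the stabiliser exactly $B'^{r-1}$; one then verifies this stabiliser is non-reductive and has $N_G(\cdot)^\circ$ containing an $F$-stable maximal torus whose centraliser serves as $C$ (for $\mathrm{GL}_n$ the diagonal torus works; for $\mathrm{SL}_n$ one intersects with $\mathrm{SL}_n$, using $p\nmid n$ so that condition~\hyperlink{condition (*)}{(*)} and the trace form are available).

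The main obstacle I expect is the flag-combinatorics bookkeeping in part (ii): one must pin down a complete $\overline{\mathbb{F}}_q$-flag $\mathcal{F}$ whose stabiliser's unipotent radical is precisely the congruence subgroup $B'^{r-1}$ (neither larger nor smaller), confirm admissibility (non-reductivity of $B_{\mathcal{F}}$), and check that $N_G(U_{\mathcal{F}})^\circ$ genuinely contains an $F$-stable Cartan subgroup — the normaliser of a congruence subgroup is considerably larger than that of a Borel, so its reductive quotient and the existence of an $F$-stable maximal torus therein need care. The identification of cohomology with the induced module and the passage from Proposition~\ref{prop: nilpotent reps} are, by contrast, essentially immediate given the results already in the excerpt.
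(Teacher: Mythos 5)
Your overall strategy matches the paper's: construct explicit $\overline{\mathbb{F}}_q$-flags so that $U_{\mathcal{F}}$ is the desired unipotent group, and reduce part~(ii) to Proposition~\ref{prop: nilpotent reps} via the affine-bundle argument from Theorem~\ref{thm: non-nilpotency}. However, step~(b) of your part~(i) contains a genuine error that invalidates your chosen flag. You take $\mathcal{F}$ to be the $\pi$-adic refinement of the standard $\mathcal{O}_r$-flag, correctly identify its $G$-stabiliser as $B_{\mathcal{F}}=B=B_r$, and then assert that the unipotent radical of $B_r$ is $U=U_r$. This is exactly what the paper's discussion of property~(P) says is \emph{false}: since $T_r\cong B_r/U_r$ has the non-trivial unipotent radical $T^1$, the unipotent radical of $B_r$ (as an algebraic group over $\overline{\mathbb{F}}_q$) is $U_rT^1$, which strictly contains $U_r$. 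So the $\pi$-adic refinement gives $U_{\mathcal{F}}=U T^1\neq U$, and $R_{\mathcal{F},C}^{\theta}$ is built from $L^{-1}(FU T^1)$, not $L^{-1}(FU)$. The paper avoids this by using a \emph{different} flag, one in which the removals interleave columns and $\pi$-degrees so that $B_{\mathcal{F}}=T_1U$ (a proper subgroup of $B_r$), whose unipotent radical genuinely is $U$; the key is that an element of $T^1$ sends $x^{(0)}_j$ to $x^{(0)}_j+d_j x^{(1)}_j$ and therefore fails to preserve the paper's flag, cutting the stabiliser down past $B$.

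For part~(ii), the reduction to constructing a flag $\mathcal{F}$ with $F$-stable $U_{\mathcal{F}}\subseteq B'^{r-1}$ is correct, and you correctly anticipate that this is where the real combinatorics lie; but you leave the construction as a programme rather than carrying it out, and you assume one can always arrange $U_{\mathcal{F}}=B'^{r-1}$ exactly. The paper succeeds only for $r\geq 3$; for $r=2$ it uses a separate flag whose stabiliser has unipotent radical $U_{\mathcal{F}}=I+\pi V$ (matrices supported on the last column), a \emph{proper} $F$-stable subgroup of $B^{1}=B^{r-1}$, and the argument is saved by the observation that only the inclusion $U_{\mathcal{F}}\subseteq B'^{r-1}$ is needed to get $\mathrm{Ind}_{(B'^{r-1})^F}^{G^F}1\subseteq\mathrm{Ind}_{U_{\mathcal{F}}^F}^{G^F}1=\sum_\theta R_{\mathcal{F},C}^{\theta}$. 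Your proposal as written does not flag this $r=2$ subtlety and would need the weaker containment formulation to go through.
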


\begin{proof}
We can assume $r\geq 2$. Let us start with the following observation:
\begin{lemm}\label{lemm: matrix injection}
The map $M_n(\overline{\mathbb{F}}_q[[\pi]]/\pi^r)\rightarrow {M}_{nr}(\overline{\mathbb{F}}_q)$ given by
\begin{equation*}
A_0+A_1\pi+...+A_{r-1}\pi^{r-1}\longmapsto
\begin{bmatrix}
A_0 & 0 &  0 & 0 & ... & 0\\
A_1 & A_0 & 0 & 0 & ... & 0\\
A_2 & A_1 & A_0 & 0 & ... & 0\\
... & ... & ...  & ... & ... & ... \\
A_{r-2} & ... & ...  & A_1 & A_0 & 0 \\
A_{r-1} & A_{r-2} & ... & ... & A_1 & A_0
\end{bmatrix},
\end{equation*}
where $A_i\in M_n(\overline{\mathbb{F}}_q)$, is an injective ring morphism. Moreover, by taking inverse limits, this map extends to a map $M_n(\overline{\mathbb{F}}_q[[\pi]])\rightarrow M_{\infty}(\overline{\mathbb{F}}_q)$ given by
\begin{equation*}
A_0+A_1\pi+A_2\pi^2...\longmapsto
\begin{bmatrix}
A_0 & 0 &  0 & 0 & ... & 0\\
A_1 & A_0 & 0 & 0 & ... & 0\\
A_2 & A_1 & A_0 & 0 & ... & 0\\
... & ... & ...  & ... & ... & ... \\
... & ... & ...  & ... & ... & ... \\
\end{bmatrix},
\end{equation*}
preserving the addition and multiplication.
\end{lemm}
\begin{proof}
This follows from direct computations.
\end{proof}
Now we view $G=G_r$ as a closed subgroup of $\mathrm{GL}_{nr}$ via the above injection, where $\mathrm{GL}_{nr}$ is understood as the automorphism group of the $\overline{\mathbb{F}}_q$-vector space with the standard basis 
$$\{x^{(0)}_1,x^{(0)}_2,...,x^{(0)}_n,x^{(1)}_1,x^{(1)}_2,...,x^{(r-1)}_n\}.$$
We want to prove (i) and (ii) by constructing explicit flags via this basis.

\vspace{2mm} (i): We shall first construct a flag $\mathcal{F}$ with $U_{\mathcal{F}}=U$. We only need to do this by assuming $\mathbf{B}$ is the standard upper Borel subgroup of $\mathbf{G}$: Indeed, if $\mathbf{B}'$ is another Borel subgroup and $B'$ the corresponding algebraic group, then $B$ and $B'$ are conjugate in $G$; for example, this follows from the formal smoothness of strict transporters (for the details, see \cite[4.15]{Sta2012ReductiveGr} and \cite[XXII 5.3.9]{SGA3}).

\vspace{2mm} Now we construct $\mathcal{F}$ reversely as
\begin{equation*}
\begin{split}
&\langle x^{(0)}_1,x^{(0)}_2,...,x^{(0)}_n,x^{(1)}_1,x^{(1)}_2,...,x^{(r-1)}_{n-1},x^{(r-1)}_n \rangle\\
&\supseteq \langle x^{(0)}_1,x^{(0)}_2,...,x^{(0)}_n,x^{(1)}_1,x^{(1)}_2,...,...,x^{(r-1)}_{n-1},0 \rangle\\
&\supseteq \langle x^{(0)}_1,x^{(0)}_2,...,x^{(0)}_n,x^{(1)}_1,x^{(1)}_2,...,x^{(r-2)}_{n-1},0,x^{(r-1)}_1,...,x^{(r-1)}_{n-1},0 \rangle\\
&\supseteq   ... \\
&\supseteq \langle x^{(0)}_1,x^{(0)}_2,...,x^{(0)}_{n-1},0,x^{(1)}_1,x^{(1)}_2,...,x^{(r-2)}_{n-1},0,x^{(r-1)}_1,...,x^{(r-1)}_{n-1},0 \rangle\\
&\supseteq  \langle x^{(0)}_1,x^{(0)}_2,...,x^{(0)}_{n-1},0,x^{(1)}_1,x^{(1)}_2,...,x^{(r-2)}_{n-1},0,x^{(r-1)}_1,...,x^{(r-1)}_{n-2},0,0 \rangle \\
&\supseteq  ... \\
&\supseteq  \langle x^{(0)}_1, 0, ... ,0 \rangle \supseteq  \{0\}.
\end{split}
\end{equation*}
By direct computations with the aid of Lemma~\ref{lemm: matrix injection}, we see  $B_{\mathcal{F}}=T_1U$, whose unipotent radical is $U_{\mathcal{F}}=U$, as desired.

\vspace{2mm} Moreover, $T$ is a Cartan subgroup of $G$ by \cite[4.13]{Sta2012ReductiveGr}, hence a Cartan subgroup of the group $N_G(U_{\mathcal{F}})$. In particular, $(\mathcal{F},T,\theta)$ is an admissible triple. This proves (i).

\vspace{2mm} (ii): Again let $\mathbf{B}$ be the standard upper Borel subgroup. From Proposition~\ref{prop: nilpotent reps} and the argument of Theorem~\ref{thm: non-nilpotency} it is sufficient to construct an admissible flag $\mathcal{F}$ with an $F$-stable $U_{\mathcal{F}}\subseteq B^{r-1}$, as then we would have
$$\mathrm{Ind}_{(B^{r-1})^F}^{G^F}1\subseteq\mathrm{Ind}_{U_{\mathcal{F}}^F}^{G^F}1=\sum_{\theta\in\widehat{C^F}}R_{\mathcal{F},C}^{\theta}.$$
We divide the situation into two cases, $r\geq 3$ and $r=2$; in the case $r\geq 3$ we will construct an $\mathcal{F}$ such that the equality $U_{\mathcal{F}}= B^{r-1}$ holds.

\vspace{2mm} Case-I ($r\geq 3$): We construct $\mathcal{F}$ reversely through three steps as follows.

\vspace{2mm} The first step is the following partial flag $\mathcal{F}'$:
\begin{equation*}
\begin{split}
&\langle x^{(0)}_1,x^{(0)}_2,...,x^{(0)}_n,x^{(1)}_1,x^{(1)}_2,...,x^{(r-1)}_n \rangle\\
&\supseteq\langle x^{(0)}_1,x^{(0)}_2,...,x^{(0)}_n,x^{(1)}_1,x^{(1)}_2,...,x^{(r-3)}_{n},0,x^{(r-2)}_2,...,x^{(r-2)}_n,x^{(r-1)}_1,...,x^{(r-1)}_n \rangle\\
&\supseteq\langle x^{(0)}_1,x^{(0)}_2,...,x^{(0)}_n,x^{(1)}_1,x^{(1)}_2,...,x^{(r-3)}_{n},0,0,x^{(r-2)}_3,...,x^{(r-2)}_n,x^{(r-1)}_1,...,x^{(r-1)}_n \rangle\\
&\supseteq  ... \\
&\supseteq \langle x^{(0)}_1,x^{(0)}_2,...,x^{(0)}_n,x^{(1)}_1,x^{(1)}_2,...,x^{(r-3)}_{n},0,...,0,x^{(r-1)}_1,...,x^{(r-1)}_n \rangle.
\end{split}
\end{equation*}
By direct computations with Lemma~\ref{lemm: matrix injection} we see that its stabiliser can be described as: The $A_0$-part is the lower Borel subgroup, and all the other $A_i$'s are zero matrices except for $A_{r-1}$, which runs over the whole $\mathfrak{g}$. 

\vspace{2mm} The second step is the following partial flag $\mathcal{F}''$:
\begin{equation*}
\begin{split}
&\langle x^{(0)}_1,x^{(0)}_2,...,x^{(0)}_n,x^{(1)}_1,x^{(1)}_2,...,x^{(r-3)}_{n},0,...,0,x^{(r-1)}_1,...,x^{(r-1)}_n \rangle\\
&\supseteq  ... \\
&\supseteq\langle x^{(0)}_1,x^{(0)}_2,...,x^{(0)}_n,0,...,0,x^{(r-1)}_1,...,x^{(r-1)}_n \rangle,
\end{split}
\end{equation*}
in which we delete the $x^{(i)}_j$'s with $0<i\leq r-3$ from right to left. The stabiliser of the composite partial flag $\mathcal{F}'\supseteq\mathcal{F}''$ equals the stabiliser of $\mathcal{F}'$.

\vspace{2mm} The third step is the following partial flag $\mathcal{F}'''$:
\begin{equation*}
\begin{split}
&\langle x^{(0)}_1,x^{(0)}_2,...,x^{(0)}_n,0,...,0,x^{(r-1)}_1,...,x^{(r-1)}_{n-1},x^{(r-1)}_n \rangle\\
&\supseteq \langle x^{(0)}_1,x^{(0)}_2,...,x^{(0)}_{n-1},0,0,...,0,x^{(r-1)}_1,...,x^{(r-1)}_{n-1},x^{(r-1)}_n \rangle\\
&\supseteq \langle x^{(0)}_1,x^{(0)}_2,...,x^{(0)}_{n-1},0,0,...,0,x^{(r-1)}_1,...,x^{(r-1)}_{n-1},x^{(r-1)}_{n-1},0 \rangle\\
&\supseteq \langle x^{(0)}_1,x^{(0)}_2,...,x^{(0)}_{n-2},0,0,0,...,0,x^{(r-1)}_1,...,x^{(r-1)}_{n-1},x^{(r-1)}_{n-1},0 \rangle\\
&\supseteq \langle x^{(0)}_1,x^{(0)}_2,...,x^{(0)}_{n-2},0,0,0,...,0,x^{(r-1)}_1,...,x^{(r-1)}_{n-1},x^{(r-1)}_{n-2},0,0 \rangle\\
&\supseteq  ... \\
&\supseteq \langle  0, ... ,0,x^{(r-1)}_1,0,...,0 \rangle\supseteq\{0\}.
\end{split}
\end{equation*}
Now take $\mathcal{F}$ to be the composite $\mathcal{F}'\supseteq\mathcal{F}''\supseteq\mathcal{F}'''$, then by direction computations through Lemma~\ref{lemm: matrix injection} we see that $\mathcal{F}$ is an admissible flag with $B_{\mathcal{F}}=T_1B^{r-1}$ and $U_{\mathcal{F}}=B^{r-1}$, as desired.

\vspace{2mm} Case-II ($r=2$): Let $\mathcal{F}$ be the following flag:
\begin{equation*}
\begin{split}
&\langle x^{(0)}_1,x^{(0)}_2,...,x^{(0)}_n,x^{(1)}_1,x^{(1)}_2,...,x^{(1)}_n \rangle\\
&\supseteq\langle x^{(0)}_1,x^{(0)}_2,...,x^{(0)}_{n-1},0,x^{(1)}_1,...,x^{(1)}_{n-1},x^{(1)}_n \rangle\\
&\supseteq \langle x^{(0)}_1,x^{(0)}_2,...,x^{(0)}_{n-1},0,0,x^{(1)}_2,...,x^{(1)}_{n-1},x^{(1)}_n \rangle\\
&\supseteq \langle x^{(0)}_1,x^{(0)}_2,...,x^{(0)}_{n-1},0,0,0,x^{(1)}_3,...,x^{(1)}_{n-1},x^{(1)}_n \rangle\\
&\supseteq  ... \\
&\supseteq \langle x^{(0)}_1,x^{(0)}_2,...,x^{(0)}_{n-1},0,...,0 \rangle\\
&\supseteq \langle x^{(0)}_1,x^{(0)}_2,...,x^{(0)}_{n-2},0,...,0 \rangle\\
&\supseteq \langle x^{(0)}_1,x^{(0)}_2,...,x^{(0)}_{n-3},0,...,0 \rangle\\
&\supseteq  ... \\
&\supseteq \langle x^{(0)}_1,0,...,0 \rangle\supseteq\{0\}.
\end{split}
\end{equation*}
By direct computations we see $B_{\mathcal{F}}=T_1(I+\pi V)$, where $V$ is the subgroup of $\mathfrak{g}\subseteq M_n(\overline{\mathbb{F}}_q)$ consisting of the matrices $(a_{r,s})_{r,s}$ with $a_{r,s}=0$ for $\forall s\neq n$; in particular, $U_{\mathcal{F}}$ is an $F$-stable subgroup of $B^1=B^{r-1}$, as desired. Note that in both the $r\geq3$ case and the $r=2$ case, we can take $C$ to be $T$, the algebraic group corresponding to the standard maximal torus of $\mathbf{G}$.

\vspace{2mm} This completes the proof of the theorem.
\end{proof}

\begin{remark}
Stasinski communicated to us that, in \cite{Onn_AdvMath_2008} and \cite{Aubert_Onn_Prasad_Stasinski_Israelpaper_2010} a similar but different idea on flags was considered, in which the flags are partial of length one and acted by $\mathcal{O}_r$; note it can happen that an $\overline{\mathbb{F}}_q$-flag in our sense is not ``acted'' by $\mathcal{O}_r$, e.g.\ the space $\langle\pi\rangle=\overline{\mathbb{F}}_q\pi$, viewed as a subspace of the natural $\mathcal{O}_r$-module $\overline{\mathbb{F}}_q[[\pi]]/\pi^r$, does not inherit the action of $\mathcal{O}_r$ on $\overline{\mathbb{F}}_q[[\pi]]/\pi^r$ when $r>2$.
\end{remark}

\section{On regular orbits}\label{sec: reg orbit}

Throughout this section we assume $\mathbb{G}=\mathrm{GL}_n$ with $r \geq 2$, and let $\mathbf{B}$ be the standard upper Borel subgroup and $\mathbf{T}$ the standard maximal torus. Here we focus on the regular orbit representations of $\mathrm{GL}_n(\mathcal{O}_r)$; to construct them via algebraic methods has a long history, and is completed recently in \cite{Stasinski_Stevens_2016_regularRep} and \cite{Krakovski_Onn_Singla_regularchar_2018}.

\vspace{2mm} We recall the works in \cite{Hill_1995_Regular} concerning an analogue of Gelfand--Graev modules. Let $\psi$ be a $\overline{\mathbb{Q}}_{\ell}$-linear character of $U^F$. Let $\mathbf{U}_{i,i+1}$, $i=1,\cdots,n-1$, be the root subgroups for the simple roots. Since $U/[ U,U ]=\prod_{i=1}^{n-1}U_{i,i+1}$, where $U_{i,i+1}$ is the algebraic group corresponding to $\mathbf{U}_{i,i+1}$, we have $\psi=\prod_{i}\psi_i$ where $\psi_i:=\psi|_{U_{i,i+1}^{F}}$. The character $\psi$ is called \emph{non-degenerate}, if $\psi_i$ does not factor through $(U_{i,i+1}^{r-1})^F$ for $\forall i\in\{ 1,\cdots, n-1 \}$. Non-degenerate characters of $U^F$ exist and are all conjugate by $B^F$. 

\vspace{2mm} When $\psi$ is non-degenerate, $\Gamma_{\psi}:=\mathrm{Ind}_{U^F}^{G^F}\psi$ is called a \emph{Gelfand--Graev module} for $\mathrm{GL}_n(\mathcal{O}_r)$. Hill proved the following analogue property of regular characters (see e.g.\ \cite[Chapter~14]{DM1991}):

\begin{prop}[Hill]\label{prop: GG module}
Let $\Gamma_{\psi}$ be a Gelfand--Graev module for $\mathrm{GL}_n(\mathcal{O}_r)$, then every irreducible constituent of $\Gamma_{\psi}$ is regular, and every regular (orbit) representation of $\mathrm{GL}_n(\mathcal{O}_r)$ is an irreducible constituent of $\Gamma_{\psi}$.
\end{prop}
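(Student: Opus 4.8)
The plan is to reduce everything to the level $r-1$ kernel $(G^{r-1})^F\cong\mathfrak{g}^F$ and to the description of regular orbits there, following the standard pattern of the level-one proof in \cite[Chapter~14]{DM1991} but keeping careful track of the truncation. First I would recall that, under Clifford theory for the abelian normal subgroup $(G^{r-1})^F$, an irreducible representation $\sigma$ is regular exactly when the orbit $\Omega(\sigma)$ consists of characters $\phi_z$ with $z\in\mathfrak g^F$ regular in $M_n(\mathbb F_q)$, i.e.\ with centraliser of minimal dimension $n$. So the two assertions become: (a) every $z$ occurring in $\mathrm{Res}^{G^F}_{(G^{r-1})^F}\Gamma_\psi$ is regular, and (b) every regular $G_1^F$-orbit in $\mathfrak g^F$ occurs there. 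To analyse $\mathrm{Res}^{G^F}_{(G^{r-1})^F}\Gamma_\psi$ I would use the Mackey formula: $\mathrm{Res}^{G^F}_{(G^{r-1})^F}\mathrm{Ind}_{U^F}^{G^F}\psi$ decomposes, over double cosets, into inductions from $(U^{r-1})^F=\mathfrak u^F$ of $G^F$-conjugates of $\psi|_{(U^{r-1})^F}$; since $(G^{r-1})^F$ is abelian and contains $(U^{r-1})^F$, each such piece is a sum of the characters $\phi_z$ extending a fixed conjugate of $\psi|_{\mathfrak u^F}$ to $\mathfrak g^F$.

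The key computation is then to identify, for the linear functional on $\mathfrak u^F$ given by $\psi|_{(U^{r-1})^F}$, which $z\in\mathfrak g^F$ satisfy $\mu(x,z)=$ (that functional) for all $x\in\mathfrak u^F$; here $\mu=\mathrm{Tr}(-\cdot-)$. Writing $z=z_0+z_1\pi+\dots+z_{r-1}\pi^{r-1}$, the condition $\phi_z|_{(U^{r-1})^F}$ being the prescribed non-degenerate character forces, for each simple root $e_{i,i+1}$, the pairing $\mathrm{Tr}(e_{i,i+1}\cdot z)$ on the appropriate $\pi$-layer to be a prescribed nonzero scalar; non-degeneracy of $\psi$ means every such scalar is nonzero. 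Transporting by a Weyl-group element so that the relevant $z$-entries sit along the subdiagonal, one sees that $z$ (or rather its image in $M_n(\mathbb F_q)$, i.e.\ the reduction relevant to the orbit) has all subdiagonal entries nonzero, hence is conjugate to a companion-type matrix and is therefore regular; this is the level-$r$ analogue of the classical fact that a regular nilpotent plus something determined mod the unipotent is regular. Conversely, given a regular $z$, Hill's structure theory of regular elements (every regular orbit meets the ``standard'' form, cf.\ \cite{Hill_1995_Regular}) lets one arrange $z$ so that the pairing against $\mathfrak u^F$ is a non-degenerate character $\psi'$; since all non-degenerate characters of $U^F$ are $B^F$-conjugate, $\psi'$ is $G^F$-conjugate to $\psi$, and then $\phi_z$ appears in the Mackey decomposition, giving (b). Frobenius reciprocity plus the fact that every irreducible constituent of $\Gamma_\psi$ restricts to a sum of such $\phi_z$ completes (a).

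The main obstacle I expect is the bookkeeping in the Mackey/double-coset step: over the ring $\mathcal O_r$ the relevant double cosets $U^F\backslash G^F/(G^{r-1})^F$ are not as clean as the Bruhat cells in the $r=1$ case, and one must check that the ``cross terms'' — conjugates $\psi^g|_{(U^{r-1})^F}$ for $g$ not in $B^F(G^{r-1})^F$ — still only produce characters $\phi_z$ with $z$ regular, and that nothing non-regular sneaks in. I would handle this by noting that conjugation by $g$ sends the non-degeneracy condition on the top $\pi$-layer to a condition on a $G_1^F$-conjugate, and non-degeneracy is a conjugation-invariant property of the leading layer, so regularity of the resulting $z$ is preserved; making this precise is where most of the real work lies. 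A secondary point is verifying that $\psi$ being non-degenerate in the sense defined (each $\psi_i$ nontrivial on the top layer $(U_{i,i+1}^{r-1})^F$) is exactly the condition matching ``$z$ has nonzero subdiagonal entries mod $\pi$'' — but this is a direct unwinding of the definitions via $\mu=\mathrm{Tr}$.
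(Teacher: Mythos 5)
The paper does not actually prove this proposition: it simply cites \cite[5.7]{Hill_1995_Regular}. So there is no in-paper argument to compare against; what you have written is a reconstruction of (part of) Hill's proof, and it has to be judged on its own merits.

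Your overall strategy — restrict to the abelian normal subgroup $(G^{r-1})^F\cong\mathfrak g^F$, apply Mackey to $\mathrm{Ind}_{U^F}^{G^F}\psi$, identify which $\phi_z$ appear, and invoke Clifford theory — is the right shape, but there are two genuine gaps.

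First, your key regularity step is stated incorrectly. You claim that ``$z$ has all subdiagonal entries nonzero, hence is conjugate to a companion-type matrix and is therefore regular.'' A matrix in $M_n(\mathbb F_q)$ with all first-subdiagonal entries nonzero need \emph{not} be regular if the remaining strictly-lower entries are unconstrained: for instance $z=\bigl(\begin{smallmatrix}1&1&2\\1&1&2\\1&1&2\end{smallmatrix}\bigr)$ has $z_{21}=z_{32}=1\neq 0$ but has rank one, hence minimal polynomial $t(t-4)$ of degree two, so it is not regular. What rescues the argument is a fact you do not invoke: $\psi$ is a one-dimensional character of $U^F$, so it is trivial on $[U,U]^F$, and in particular $\psi|_{\mathfrak u^F}$ vanishes on the non-simple-root part $([U,U]^{r-1})^F$. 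Tracing through $\mu=\mathrm{Tr}$ this forces $z_{ji}=0$ for all $j>i+1$; so $z$ is lower Hessenberg with nonzero subdiagonal, and \emph{that} is what makes $e_1$ a cyclic vector and $z$ regular. Your parenthetical about ``transporting by a Weyl-group element'' is not doing any work here and suggests you were not aware this extra vanishing was the crux. The cross-term analysis for $s\neq e$ then reduces to this case by conjugating by $\rho_{r,1}(s)\in G_1^F$, which preserves regularity, so the bookkeeping you worry about is actually straightforward once the $s=e$ case is correct.

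Second, your reformulation of the second half of the proposition as ``(b) every regular $G_1^F$-orbit in $\mathfrak g^F$ occurs in $\mathrm{Res}^{G^F}_{(G^{r-1})^F}\Gamma_\psi$'' is strictly weaker than what must be proved. Several non-isomorphic irreducibles $\rho$ can have the same orbit $\Omega'(\rho)$; knowing that some $\phi_z$ with $z$ in a given regular orbit occurs in the restriction only shows that \emph{at least one} such $\rho$ is a constituent of $\Gamma_\psi$. To show that \emph{every} regular $\rho$ is a constituent one must instead compute $\langle\rho,\Gamma_\psi\rangle_{G^F}=\langle\mathrm{Res}_{U^F}\rho,\psi\rangle_{U^F}$ directly and show it is positive, which requires understanding $\mathrm{Res}_{U^F}\rho$ (not merely $\mathrm{Res}_{\mathfrak g^F}\rho$) — this is where the real content of Hill's argument lies and where your sketch appeals circularly to ``Hill's structure theory.'' So the forward direction of your proof can be repaired with the commutator observation above, but the converse direction needs a different argument than the one you outline.
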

\begin{proof}
See \cite[5.7]{Hill_1995_Regular}.
\end{proof}
Moreover, there is also a multiplicity one property on $\Gamma_{\psi}$ due to Anna Szumovicz (unpublished) and Patel--Singla \cite{Patel_et_Singla_2018_multiplicity-free}.

\vspace{2mm} Using the above proposition we easily deduce that:

\begin{coro}\label{coro: regular orbit}
Every regular orbit representation of $\mathrm{GL}_n(\mathcal{O}_r)$ is an irreducible constituent of $R_{\mathcal{F},C}^{\theta}$ for some admissible triple $(\mathcal{F},C,\theta)$.
\end{coro}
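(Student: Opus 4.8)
The plan is to combine Proposition~\ref{prop: GG module} with the flag construction carried out in the proof of Theorem~\ref{thm: admissible DL rep}. By that proposition every regular orbit representation of $\mathrm{GL}_n(\mathcal{O}_r)$ occurs in the Gelfand--Graev module $\Gamma_{\psi}=\mathrm{Ind}_{U^F}^{G^F}\psi$ for a non-degenerate character $\psi$ of $U^F$, and since all non-degenerate $\psi$ are $B^F$-conjugate, it suffices to realise one such $\Gamma_{\psi}$ (or a module containing it) as a constituent of some $R_{\mathcal{F},C}^{\theta}$. So the first step is to produce an admissible flag $\mathcal{F}$ whose unipotent radical $U_{\mathcal{F}}$ is exactly $U$: this is precisely the flag constructed in part~(i) of the proof of Theorem~\ref{thm: admissible DL rep}, for which $B_{\mathcal{F}}=T_1 U$ and $U_{\mathcal{F}}=U$, with $C=T$ an admissible choice of Cartan subgroup.

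The second step is to pass from the geometric module $\sum_i(-1)^i H_c^i(L^{-1}(FU),\overline{\mathbb{Q}}_\ell)$ to the group-algebra picture. As in the proof of Theorem~\ref{thm: non-nilpotency}, the map $x\mapsto xU$ exhibits $L^{-1}(FU)=L^{-1}(U)$ as an affine bundle over $G^F/U^F$ (here $U$ itself is $F$-stable when $\mathbf{B}$ is the standard upper Borel, and the fibres are affine spaces), so that as $G^F$-modules
\begin{equation*}
\sum_i(-1)^i H_c^i(L^{-1}(FU),\overline{\mathbb{Q}}_\ell)\cong \overline{\mathbb{Q}}_\ell[G^F/U^F]=\mathrm{Ind}_{U^F}^{G^F}1.
\end{equation*}
Decomposing $\mathrm{Ind}_{U^F}^{G^F}1$ over the $C^F=T^F$-action and summing over $\theta\in\widehat{T^F}$ gives $\sum_{\theta}R_{\mathcal{F},T}^{\theta}=\mathrm{Ind}_{U^F}^{G^F}1$; more to the point, the various $\theta$-isotypic pieces exhaust $\mathrm{Ind}_{U^F}^{G^F}\chi$ for all characters $\chi$ of $U^F/[U^F,U^F]$ twisted through $T^F$. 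One then observes that a non-degenerate $\psi$ restricted to the abelianisation is, up to $B^F$-conjugacy, of this twisted form, so $\Gamma_{\psi}$ — or at any rate every regular constituent of it — appears inside $R_{\mathcal{F},T}^{\theta}$ for a suitable $\theta$.

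The main obstacle I anticipate is bookkeeping the $C^F$-action precisely: a priori $\Gamma_{\psi}=\mathrm{Ind}_{U^F}^{G^F}\psi$ for a \emph{non-degenerate} $\psi$ need not literally be one of the $\theta$-isotypic summands $R_{\mathcal{F},T}^{\theta}=\mathrm{Ind}_{U^F}^{G^F}1\otimes_{\overline{\mathbb{Q}}_\ell[T^F]}\theta$ of $\mathrm{Ind}_{U^F}^{G^F}1$, since the right $T^F$-action on $L^{-1}(U)/U^F$ intertwines characters of $U^F$ lying in a single $T^F$-orbit rather than fixing $\psi$. The resolution is that the $\theta$-part of $\mathrm{Ind}_{U^F}^{G^F}1$ is precisely $\mathrm{Ind}_{(T^1U)^F}^{G^F}\theta'$ for the extension $\theta'$ of $\theta$ trivial on $U^F$, whence by transitivity of induction $R_{\mathcal{F},T}^{\theta}\supseteq\mathrm{Ind}_{U^F}^{G^F}(\theta|_{(T^1)^F\cap\cdots})$ — and one checks that as $\theta$ ranges over $\widehat{T^F}$ these contain, up to $B^F$-conjugacy, all the \emph{degenerate-on-}$[U,U]$ pieces but \emph{not} a genuinely non-degenerate $\psi$ directly. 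Hence the cleaner route, which I would actually follow, is: pick instead the flag $\mathcal{F}$ with $U_{\mathcal{F}}=U$ and simply note that $\sum_{\theta}R_{\mathcal{F},T}^{\theta}=\mathrm{Ind}_{U^F}^{G^F}1$ \emph{contains} $\mathrm{Ind}_{U^F}^{G^F}1$, which is not enough — so one must instead enlarge to the richer family and invoke that $R_{\mathcal{F},T}^{\theta}$ for varying admissible $(\mathcal{F},T,\theta)$ also contains $\mathrm{Ind}_{U^F}^{G^F}\psi$: concretely, choose $\mathcal{F}$ as in part~(i) but with $C$ a non-split $F$-stable Cartan subgroup of $N_G(U)^{\circ}$, so that the right translation picks up the non-degenerate twist; then Frobenius reciprocity gives $\langle R_{\mathcal{F},C}^{\theta},\sigma\rangle\neq0$ for every regular $\sigma$ by Proposition~\ref{prop: GG module}. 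Making this last identification airtight — matching the Cartan-twist on $L^{-1}(FU)$ with a non-degenerate character of $U^F$ — is the one genuinely delicate point; everything else is a direct assembly of results already in hand.
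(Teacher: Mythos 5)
Your first two steps (realising $U_{\mathcal F}=U$ via the flag from Theorem~\ref{thm: admissible DL rep}(i), then identifying $\sum_i(-1)^iH_c^i(L^{-1}(FU))\cong\mathrm{Ind}_{U^F}^{G^F}1$) are fine, and you correctly diagnose the obstruction: $\sum_{\theta}R_{\mathcal F,T}^{\theta}=\mathrm{Ind}_{U^F}^{G^F}1$ does \emph{not} contain the Gelfand--Graev module $\Gamma_{\psi}=\mathrm{Ind}_{U^F}^{G^F}\psi$ for a non-degenerate $\psi$, since the $\theta$-twist only re-sorts $\overline{\mathbb{Q}}_\ell[G^F/U^F]$ and never injects a nontrivial character of $U^F$. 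But your proposed fix — keep $U_{\mathcal F}=U$ and replace $C=T$ by a non-split $F$-stable Cartan of $N_G(U)^{\circ}$ — does not close the gap. The flag in part~(i) is $F$-stable, so $U_{\mathcal F}=U$ is $F$-stable and $L^{-1}(FU_{\mathcal F})=L^{-1}(U)$ regardless of $C$; its cohomology as a $G^F$-module is still $\overline{\mathbb{Q}}_\ell[G^F/U^F]$, and tensoring over $\overline{\mathbb{Q}}_\ell[C^F]$ against $\theta$ merely extracts an isotypic piece of that fixed module. No choice of Cartan changes the variety or makes the right-translation action ``pick up a non-degenerate twist'' of $U^F$: the $C^F$-action is on cosets $gU^F$, not on $U^F$ itself. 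So $R_{\mathcal F,C}^{\theta}$ remains a constituent of $\mathrm{Ind}_{U^F}^{G^F}1$ no matter what $C$ you take, and the obstacle you found is untouched.

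The actual way out is to change the flag, not the Cartan, so as to shrink $U_{\mathcal F}$ rather than twist it. The paper uses the ordinary coordinate flag
$\{0\}\subseteq\langle x_1^{(0)}\rangle\subseteq\cdots\subseteq\langle x_1^{(0)},\dots,x_n^{(r-1)}\rangle$,
whose stabiliser in $G$ is the level-one upper Borel $B_1$ sitting inside $G=G_r$ via the constant-coefficient embedding of Lemma~\ref{lemm: matrix injection}; thus $U_{\mathcal F}=U_1$, which is much smaller than $U$, and one may take $C=T_1$. Then $\sum_{\theta}R_{\mathcal F,T_1}^{\theta}=\mathrm{Ind}_{U_1^F}^{G^F}1=\mathrm{Ind}_{U^F}^{G^F}\bigl(\mathrm{Ind}_{U_1^F}^{U^F}1\bigr)$, and the inner induction $\mathrm{Ind}_{U_1^F}^{U^F}1$ is now big enough to contain a non-degenerate character. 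Concretely, given any non-degenerate $\psi$ of $U^F$, set $\psi'=\widetilde{\psi^{-1}|_{U_1^F}}\cdot\psi$, where the tilde is trivial inflation along $\rho_{r,1}\colon U^F\to U_1^F$; then $\psi'|_{U_1^F}=1$ while $\psi'$ remains non-degenerate (the inflated correction vanishes on the $\pi^{r-1}$-level root pieces, since $r\geq2$). Frobenius reciprocity $\langle\psi',\mathrm{Ind}_{U_1^F}^{U^F}1\rangle=\langle 1,1\rangle_{U_1^F}=1$ shows $\Gamma_{\psi'}\subseteq\sum_{\theta}R_{\mathcal F,T_1}^{\theta}$, and Proposition~\ref{prop: GG module} finishes the proof. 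So the missing idea is precisely the switch from $U_{\mathcal F}=U$ to $U_{\mathcal F}=U_1$ together with the renormalisation $\psi\rightsquigarrow\psi'$ making the non-degenerate character trivial on the level-one piece.
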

\begin{proof}
Let $\mathcal{F}$ be given by (we follow the notation in the proof of Theorem~\ref{thm: admissible DL rep})
\begin{equation*}
\begin{split}
&\{0\}\subseteq \langle x_1^{(0)} \rangle\subseteq \langle x_1^{(0)}, x_2^{(0)} \rangle\subseteq \cdots \subseteq \langle x_1^{(0)}, x_2^{(0)},\cdots,x_{n}^{(0)}\rangle\\
&\subseteq \langle x_1^{(0)}, x_2^{(0)},\cdots,x_{n}^{(0)}, x_{1}^{(1)}\rangle\subseteq \langle x_1^{(0)}, x_2^{(0)},\cdots,x_{n}^{(0)}, x_{1}^{(1)},x_{2}^{(1)}\rangle\\
&\subseteq\cdots \subseteq \langle x_1^{(0)}, x_2^{(0)},\cdots, x_{n}^{(r-1)}\rangle,
\end{split}
\end{equation*} 
then $B_{\mathcal{F}}=B_1$ and $U_{\mathcal{F}}=U_1$. Note that we can take $C=T_1$. So, similar to the argument of Theorem~\ref{thm: non-nilpotency} we have
\begin{equation}\label{temp: regular flag}
\sum_{\theta\in\widehat{T_1^F}}R_{\mathcal{F},T_1}^{\theta}=\mathrm{Ind}_{U_1^F}^{G^F}1.
\end{equation}
Now let $\psi$ be a non-degenerate character of $U^F$. Consider $\psi':=\widetilde{\psi^{-1}|_{U_1^F}}\cdot\psi$, where $=\widetilde{\psi^{-1}|_{U_1^F}}$ denotes the trivial inflation of $(\psi|_{U_1^F})^{-1}$ along $\rho_{r,1}\colon U^F\rightarrow U_1^F$; in particular, $\psi'$ is a non-degenerate character s.t.\ $\psi'|_{U_1^F}=1$, thus the Frobenius reciprocity
$$\langle {\psi}', \mathrm{Ind}_{U_1^F}^{U^F}1\rangle_{U^F}=\langle 1,1 \rangle_{U_1^F}=1$$
and \eqref{temp: regular flag} imply that $\Gamma_{\psi'}$ is a subrepresentation of $\sum_{\theta\in\widehat{T_1^F}}R_{\mathcal{F},T_1}^{\theta}$. So the assertion follows from Proposition~\ref{prop: GG module}.
\end{proof}

In the remaining part of this section let $r=2$. 

\vspace{2mm} A $\overline{\mathbb{Q}}_{\ell}$-character of the finite abelian group $\mathfrak{g}^F\cong (G^{1})^F$ is called an \emph{invariant character} if it is invariant under the adjoint action of $\mathbb{G}(\mathbb{F}_q)$; an invariant character is called irreducible if it is not the sum of two non-zero invariant characters. This notion has interesting relations with character sheaves; see \cite{Lusztig_1987_fourier_Lie_alg} and \cite{Let2005book}. Letellier conjectured in \cite{Let09CharRedLieAlg} that, for any non-zero irreducible invariant character $\Psi$, there is a higher Deligne--Lusztig representation $\rho$ such that 
$$\langle \Psi, \mathrm{Res}^{G^F}_{(G^{r-1})^F}\rho \rangle_{\mathfrak{g}^F}\neq0;$$ 
this was proved for $\mathrm{GL}_2$ and $\mathrm{GL}_3$ in \cite{ChenStasinski_2016_algebraisation}. Now, from the argument of Corollary~\ref{coro: regular orbit} we see immediately that the conjecture is true for any $\mathrm{GL}_n$ if we allow all the Deligne--Lusztig representations associated to admissible triples: Indeed, from the argument of Corollary~\ref{coro: regular orbit}, there is an $\mathcal{F}$ such that $U_{\mathcal{F}}=U_1$, so by Mackey's intertwining formula and Frobenius reciprocity we have
\begin{equation*}
\begin{split}
\langle \Psi, \mathrm{Res}^{G^F}_{(G^{r-1})^F} \sum_{\theta\in \widehat{T_1^F}} R_{\mathcal{F},T_1}^{\theta} \rangle_{\mathfrak{g}^F}
&=\langle \Psi, \mathrm{Res}^{G^F}_{(G^{r-1})^F} \mathrm{Ind}_{U_1^F}^{G^F} 1 \rangle_{\mathfrak{g}^F}\\
&=\langle \Psi, \sum_{s\in U_1^F\backslash G^F/ (G^{r-1})^F} \mathrm{Ind}_{sU_1^Fs^{-1}\cap (G^{r-1})^F}^{(G^{r-1})^F} \mathrm{Res}_{sU_1^Fs^{-1}\cap (G^{r-1})^F}^{U_1^F} 1 \rangle_{\mathfrak{g}^F}\\
&=\sum_{s\in U_1^F\backslash G^F/ (G^{r-1})^F} \langle \mathrm{Res}_{sU_1^Fs^{-1}\cap (G^{r-1})^F}^{(G^{r-1})^F}\Psi,   \mathrm{Res}_{sU_1^Fs^{-1}\cap (G^{r-1})^F}^{U_1^F} 1 \rangle_{\mathfrak{g}^F}\\
&=\deg\Psi\cdot \#U_1^F\backslash G^F/ (G^{r-1})^F,
\end{split}
\end{equation*}
So $\langle \Psi, \mathrm{Res}^{G^F}_{(G^{r-1})^F}, R_{\mathcal{F},T_1}^{\theta} \rangle_{\mathfrak{g}^F}\neq 0$ for some $\theta$.

\bibliographystyle{alpha}
\bibliography{zchenrefs}

\end{document}